\def\E{\mathbb{E}}
\def\ba{\begin{array}}
\def\ea{\end{array}}
\def\bi{\begin{itemize}}
\def\ei{\end{itemize}}
\begin{document}

\title{Identification of Finite Dimensional L\'evy Systems in Financial Mathematics}


\author{L.~Gerencs\'er\
\and M.~M\'anfay
}

\institute{L.~Gerencs\'er \at MTA SZTAKI, 13-17 Kende Street, Budapest, Hungary \\ Tel.: +36-1-2796138 \\ \email{gerencs@mta.sztaki.hu}
\and M.~M\'anfay \at MTA SZTAKI, 13-17 Kende Street, Budapest, Hungary; Central European University, 9 N\'ador Street, Budapest, Hungary\\ Tel.: +36-1-2796190 \\ \email{manfay@mta.sztaki.hu}
}

\date{Received: date / Accepted: date}

\maketitle




\begin{abstract}
L\'evy processes are widely used in financial mathematics to model
return data.
Price processes are then defined as a corresponding geometric
L\'evy process, implying the fact that returns are
independent. In this paper we propose an alternative class of
models allowing to describe dependence between return data. Technically such an
alternative model class is obtained by considering finite
dimensional linear stochastic SISO systems driven by a L\'evy
process. In this paper we consider a discrete-time version of this
model, focusing on the
problem of identifying the dynamics and the noise characteristics
of such a so-called L\'evy system. The special feature of this
problem is that the characteristic function (c.f.) of the driving
noise is explicitly known, possibly up to a few unknown
parameters.
We develop and analyze a variety of novel identification methods
by adapting the so-called empirical characteristic function method
(ECF) originally devised for estimating parameters of c.f.-s from
i.i.d. samples. Precise characterization of the errors of these
estimators will be given, and their asymptotic covariance matrices
will be obtained. Their potential to outperform the prediction
error method in estimating the system parameters will also be
demonstrated.



\end{abstract}

\keywords{ linear stochastic systems, L\'evy processes, system
identification, financial modelling}

\subclass{93E12\and60G51\and91G80}

\newpage

\section{Introduction}
\label{intro}
The classical model for modelling market dynamics, namely
geometric Brownian motion, was proposed by Louis  Bacehelier
\cite{Bacehelier}.
This model is still the accepted core model despite the fact that
empirical studies revealed that its assumptions are not realistic.
For example, since price movements are induced by transactions
which can be unevenly distributed in real time, it would be more
natural to use a time changed Brownian motion to model price
dynamics. If the time change is defined by a gamma process, we
obtain the so-called VG (shorthand for Variance Gamma) process. VG
processes reproduce a number of stylized facts of real price
processes, such as fat tails and large kurtosis. It can be shown
that the above time changed Brownian process itself is a L\'evy
process. Extending the above construction novel price dynamics
have been proposed by a variety of authors, called the geometric
L\'evy processes obtained by exponentiating a L\'evy process.

A L\'evy process $(Z_t)$ is much like a Wiener process: a process
with stationary an independent increments, but discontinuities or
jumps are allowed. A good survey paper on L\'evy processes used in
financial modelling is the paper by Miyahara and Novikov,
\cite{novikov}. \cite{raible} studies several problems arising in the field of exponential L\'evy processes. For an excellent introduction to the theory of
L\'evy processes see \cite{jacod}. A key building block in the
theory of L\'evy processes is the compound Poisson process. A more
general class L\'evy process is formally obtained via
\begin{equation}
\label{eq:Levy_def}
Z_t = \int_0^t \int_{{\mathbf R}^1} x N(ds, dx),
\end{equation}
where $N(dt,dx)$ is a time-homogeneous, space-time Poisson point
process, counting the number of jumps of size $x$ at time $t$. In
this case $Z_t$ is a pure jump process, which
paradoxically
means that the L\'evy-Ito decomposition of $Z_t$ does not have a
Brownian motion component (but it may have a drift term).
The intensity of $N(dt,dx)$ is defined by $\E [N(dt,dx)],$ which is due to time homogeneity can be written as
$$\E [N(dt,dx)] = dt \cdot \nu(dx),$$
where $\nu(dx)$ is the L\'evy-measure.
The above representation given in (\ref{eq:Levy_def})
is mathematically rigorous if
\begin{equation}
\label{eq:INTEGRABILIY}\int_{{\mathbf R}^1}  \min (|x|,1) \nu(dx)
< \infty. \end{equation} Under this condition the sample paths of
$Z_t$ are of {\it finite variation}, a property supported by
empirical evidence for most indices as emphasized in \cite{CGMY}.
The characteristic function of a L\'evy process can be written in
the form
\begin{equation}
\E \left[e^{iuZ_t}\right] = e^{t \psi(u)},
\end{equation}
where $\psi(u)$ is the characteristic exponent.

The
standard model of a price process within this framework is then
\begin{equation}
\label{eq:expmodel} S_t=S_0 \exp Z_t,
\end{equation}
and $(S_t)$ is called a geometric L\'evy process. A variety of
choices for $(Z_t)$ has been proposed in the literature: it can be
a stable process, a variance Gamma (VG) process, a tempered stable
process, a special case of which is the (CGMY) process, a
hypergeometric
process or a Normal-inverse Gaussian (NIG) process. 


The motivation behind these models is the assumption that the
returns of the stock process, say $(S_{t+h}-S_t)/S_t$ are
independent and stationary. While this is an attractive
assumption, its consequences are less attractive. In particular it
follows that the variance of the price process tends to infinity,
which is certainly unnatural for, say, prices of agricultural
products.
A closer look at data in fact reveals that there is a weak
correlation between daily returns $(S_{t+1}-S_t)/S_t.$ For
example, considering data on IBM
 Coca Cola
stock prices in a period of 20 years from Nov 1990 to Nov 2010 we
found for the correlation coefficients of daily log-returns $X_t$
that
$${\rm corr}(X_t,X_{t-1})=-0.135.$$
This small, but non-negligible,
negative correlation calls for a refinement of the exponential
L\'evy model, allowing memory in the daily return process. An
intuitive empirical argument can also be given in favor of the
need for memory: namely an overreaction of the market is generally
followed by a correction, resulting in a correlation between daily
returns. The recently much studied popular Geometric fractional Brownian motion model gives
return process with non-independent increments, for more details on fractional Brownian motion see papers of T.E. Duncan, for example \cite{duncan}.

We propose to introduce a new class of models, using the
methodology of linear system theory, to capture the presence of
decaying memory. The infinitesimal increments of the logarithm of
the price process will be defined as a process $dY_t$ which is the
output of a finite dimensional stable linear SISO (shorthand for
single-input-single-output) system, driven by a L\'evy process:
\begin{equation}
\nonumber
dY_t=A dZ_t,
\end{equation}
where $A$ represents the linear mapping from input to output, and
$Z$ is a L\'evy process. For the sake of convenience we let
$-\infty < t < + \infty$. In the case of a finite dimensional
stable linear SISO system the mapping $A$ can be described by a
set of state-space equations, a well known example of such systems is defined by:
\begin{align}
dX_t&=HX_t dt + dZ_t \\
dY_t&=LX_t dt + dZ_t
\end{align}
From the above equations we get
\begin{equation}
dY_t=L\left(\int_{-\infty}^{t}e^{H\left(t-s\right)} dZ_s\right) dt
+ dZ_t.
\end{equation}

The inverse system is formally obtained as
\begin{align}
d{X}_t&=\left(H-KL\right){X}_t dt + KdY_t \\
d Z_t&=dY_t-Ld {X}_t.
\end{align}
It is assumed that both systems $A$ and $A^{-1}$ are exponentially
stable, equivalently, we assume that both $H$ and $(H-KL)$ are
stable matrices. Such a system will be called a L\'evy system.

The inverse filter has the following form:
\begin{align}
d\hat{X}_t&=\left(H-KL\right)\hat{X}_t dt + KdY_t \\
d\varepsilon_t&=dY_t-Ld\hat{X}_t.
\end{align}

Having defined the infinitesimal increments of the logarithm of
the price process we define the price process according to
(\ref{eq:expmodel}):
$$S_t=S_0 \exp Y_t.$$

In the statistical analysis of such systems, both the system
dynamics and the fine characteristics of $(Z_t)$ are to be
identified. The first difficulty of applying a maximum-likelihood
(ML) method lies in the fact that there is no natural reference
measure in the space of sample paths. In addition, the
computation of the Radon-Nikodym derivative is practically not
feasible since $\int_{-\infty}^{t} e^{H(t-s)} dZ_s $ is not even a L\'evy process.



To avoid this problem we consider an alternative discrete-time
model class, where the daily log-returns $\Delta y_n$ are defined
via a discrete time finite dimensional system
\begin{equation}
\label{eq:disc_levy} \Delta y_n=A~\Delta Z_n,
\end{equation}
where $A$ represents the linear mapping from input to output, and
$\Delta Z_n$ is the increment of a L\'evy process $Z$ over an
interval $[(n-1)h,nh)$, with some fixed $h >0$. For the sake of
convenience we let $-\infty < n < + \infty$.
A state space equation for this model is given by
\begin{align}
\Delta X_{n+1}&=H \Delta X_n + \Delta Z_n \\
\Delta Y_n&=L \Delta X_n + \Delta Z_n.
\end{align}
We will call this model a discrete time finite dimensional L\'evy
system. Assume that $A=A(\theta^*)$ where $\theta^*$ is an unknown
parameter-vector, and similarly, let $\nu(dx)=\nu(dx,\eta^*),$
where $\eta^*$ denotes an unknown parameter-vector. The ranges of
of $\theta^*$ and $\eta^*$ are assumed to be known. The
fundamental problem to be discussed in this paper is to identify
this system and to establish sharp results for the error of the
estimator.

If we knew the probability density function of the noise $\Delta
Z_n$ then we could apply an ML (Maximum Likelihood) estimation
method, and establish sharp results for the estimation error, see
\cite{LSS_ML_GL+MGY+RZ}. The challenge of the present problem is
that it is the characteristic function of the noise that is
explicitly given. A natural approach to solve this problem is to
combine techniques of system identification with the empirical
characteristic function (ECF) method widely used in finance to
analyze i.i.d. data. Before going into further details we present
a few examples of L\'evy processes used in finance.


\section{L\'evy processes in finance}
\label{sec:levy_fin}

To model the increments of the logarithm of a price process a wide
range of geometric L\'evy processes has been proposed by a variety
of authors.
%
Mandelbrot suggested to use $\alpha$-stable process to model the
price dynamics of wool, see \cite{MANDELBROT}. An $\alpha$-stable
with $0 < \alpha < 2$ is defined via the L\'evy measure
\begin{equation}
\nonumber
\nu(dx) = C^- |x|^{-1-\alpha} {\mathbf 1}_{x < 0 } dx+ C^+ |x|^{-1-\alpha} {\mathbf 1}_{x
> 0 } dx.
\end{equation}
%
A recently widely studied class of L\'evy processes is the CGMY
process due to Carr, Geman, Madan and Yor \cite{CARR-EMPINV}. It
is obtained by setting $C^- = C^+$, and then, separately for $x
>0$ and $x<0$, multiplying the
L\'evy-density of the original symmetric stable process with a
decreasing exponential. The corresponding L\'evy-measure, using
standard parametrization, is of the form:
\begin{equation}
\nonumber
\nu(dx) = {\frac {C e^{-G |x|}} {|x|^{1+Y}}}  {\mathbf 1}_{x < 0 } dx +
{\frac {C e^{-M x}} {|x|^{1+Y}}} {\mathbf 1}_{x
> 0 } dx,
\end{equation}
where $C,G,M >0$, and $0 < Y <2$. Intuitively, $C$ controls the
level of activity, $G$ and $M$ together control skewness.
Typically $G > M$ reflecting the fact that prices tend to increase
rather than decrease. $Y$ controls the density of small jumps,
i.e. the fine structure. For $Y<1$ the integrability condition (\ref{eq:INTEGRABILIY}) is satisfied, thus corresponding L\'evy
process is of finite variation. The characteristic exponent of the
CGMY process is given by
\begin{equation}
\psi(u) = C \Gamma (-Y) \left((M-iu)^Y- M^Y + (G+iu)^Y- G^Y\right),
\end{equation}
where $\Gamma$ denotes the gamma-function.

Allowing $C$ and $Y$ to take on different values for $x
>0$ and $x<0$ we get a more general class of processes called tempered stable
process. see cite{}.

Formally setting $Y=0$ we get the L\'evy density of the so-called
Variance Gamma process (VG for short) that has been proposed by
Madan, Carr and Chang \cite{CARR-VG}. The VG process is a time
changed Brownian motion when the time change is a gamma process,
which itself is a L\'evy process, obtained by properly extending
the definition of the inverse of a Poisson process from natural
numbers to positive reals. Thus we can write
$$VG(t)=W_{\theta,\sigma}(\gamma_{\mu,\nu}(t)),$$
where $W_{\theta,\sigma}(t)=\theta t+\sigma W(t),$ with $W$ being the standard Wiener process,
and $\gamma$ is a gamma process with mean rate $\mu,$ and variance rate $\nu$, see \cite{CARR-VG}.

Its characteristic function is given by
$$
\varphi_{VG(t)}(u)=\left(1-iu\theta \nu+u^2 \sigma^2 \nu/2 \right)^{-t/\nu}.
$$
This can be obtained by a formal limiting procedure taking into
account the characteristic exponent given by (2.1) and taking $Y
\rightarrow 0.$

 The knowledge of the explicit form of the characteristic function is a common feature of distributions in finance. This is the case for tempered stable and related processes, see \cite{terdik}. We will focus on the CGMY process.
%

\section{Discrete time L\'evy systems}
\label{sec:disc_levy}
A discrete time finite dimensional L\'evy system is defined as
\begin{equation}
\label{eq:disc_levy2}
\Delta y_n=A(\theta^*) \Delta Z_n,
\end{equation}
where $\Delta Z_n$ is the increment of a L\'evy process $Z$ over
an interval $[(n-1)h,nh)$ with $\E \left[\Delta Z_n\right]=0,$ a property to be removed later and $h>0$ is a fix sampling interval. The L\'evy-measure of $Z$ will be
denoted by
  $\nu(dx)=\nu(dx,\eta^*),$ where $\eta^*$ denotes an unknown
  parameter-vector, for example for a CGMY process $\eta^*=(C,G,M,Y).$ The range of $\eta^*$ is assumed to be known.

\textbf{Condition 1}  We assume that
  \begin{equation}
  \label{eq:moments}
  \int_{|x| \ge 1}|x|^q\nu(dx)<+\infty
  \end{equation}
for all $1 \leq q \leq Q$ with some constant $Q$.

Note that Condition 1 holds with $Q=\infty$ in our benchmark examples.
%
%
%
%
%
Let $D_{\rho}$ and $D_{\rho}^*$ be compact domains such that $\rho^* \in D_{\rho}^* \subset \text{int } D_{\rho}$ and $D_{\rho} \subset G_{\rho}.$

\textbf{Condition 2}
 $A(\theta)$ is assumed to be exponentially stable and exponentially inverse stable
for $\theta\in G_{\theta}\subset \mathbb{R}^p,$ where $G_{\theta}$ is a known open
set.

A system is exponentially stable if all the eigenvalues of $A$ have strictly
negative real parts. The application of the ML
method would solve the full identification problem along standard
lines, assuming that the density function of $\Delta Z_n $ is
known, see \cite{gerencs_ML}, which is unfortunately not the case. The objective of this paper is to
present a combination of advanced techniques in systems
identification with a specific statistical technique, widely used
in the context in finance, called the ECF (shorthand for empirical characteristic function) method. The ECF method was originally designed for i.i.d. samples and A.~Feuerverger and P.~McDunnogh \cite{FEUER+MC} showed that it can be interpreted as the Fourier transform of an ML method.

\section{Three identification problems}
\label{sec:problems}
In this section we formulate three identification problems related
to discrete-time, finite dimensional L\'evy systems, and sketch a
possible path to their solution. The first, simplest problem is
seemingly of
mere technical interest:

\textit{Known system parameters, unknown noise parameters.} In
this case define and compute
$$\varepsilon_n (\theta^*)=A^{-1}(\theta^*)\Delta y_n=A^{-1}(\theta^*)A(\theta^*)\Delta Z_n=\Delta
Z_n,$$ assuming, for the sake of simplicity, that $\Delta Z_n =
\varepsilon_n (\theta^*) = 0$ for $n \le 0.$ After that we can
apply the ECF method for i.i.d. samples to obtain the estimation
of $\eta^*.$ This simple solution will be the base of the identification method presented in Section \ref{sec:mixed}.


\textit{Known noise parameters, unknown system parameters.} This
is the simplest, technically interesting and non-trivial problem.
If we knew the probability density function of the noise, say $f$,
we could obtain the maximum likelihood estimate of $\theta^*$ via
solving
\begin{equation}
\sum_{n=1}^{N} f_{\theta}\left(\varepsilon_n(\theta),\eta^*\right)=0,
\end{equation}
where
\begin{equation}
\varepsilon_n(\theta)=A^{-1}(\theta)\Delta y_n
\end{equation}
is the estimated innovation process of a SISO system, see \cite{gerencs_ML}.

Under certain conditions the asymptotic covariance matrix of the ML estimate $\hat{\theta}_N$ is
\begin{equation*}
\Sigma_{ML}=\mu \left(R^*\right)^{-1},
\end{equation*}
where
$$
\mu=
\E \left[\left(\frac{f'(\Delta Z_n,\eta^*)}{f(\Delta Z_n,\eta^*)}\right)^2\right],$$
with $f'$ being the derivative of $f$ w.r.t the first variable and
$$
R^*=\lim_{n \rightarrow \infty} \E\left[\varepsilon_{n\theta}(\theta^*)\varepsilon^T_{n\theta}(\theta^*)\right].
$$
In our case, the p.d.f. of the noise distribution is not known.
One might apply the prediction error method to estimate the system
dynamics, i.e. $\theta^*.$ However, we will show, in the case of
CGMY noise, that we may estimate $\theta^*$ in a more efficient
way using an appropriate adaptation of the ECF method. In fact,
this result is a special case of a more general result obtained
for the general problem to be described in the next subsection.

\textit{Both the system parameters and the noise parameters are
unknown.} The first method that we propose is quite
straightforward: we estimate the system parameters using a PE
method, then, using a certainty equivalence argument, we estimate
the innovation process by inverting the system using the estimated
parameters. Then, we estimate the noise parameters using ECF
method for i.i.d. sequences. This method will be studied in
Section \ref{sec:mixed}.

The second method, which is the main subject of this paper,
estimates both the system parameters and noise parameters using an
ECF method. First, an parameter-dependent, estimated innovation
process $\varepsilon_n(\theta)$ is defined, then the
characteristic function of the noise is fitted to empirical data
defined in terms of $\varepsilon_n(\theta)$. Thus we get a score
function that depends on both $\theta$ and $\eta.$

The third method applies an extension of the ECF method using the blocks of the time-series of unprocessed data $\left\{\Delta
y_n\right\}_{n=0}^{\infty}.$ More details can be found in the Discussion.

\section{Single term ECF method}
\label{sec:ECF}
The ECF method has been widely used in finance as an alternative to the ML Method, assuming i.i.d. returns \cite{CARRASCO-CGMM}, \cite{CARRASCO-EFFEMPIRCHAR}, \cite{YU-EMPIRCHARFUN}. We adapt this technique to the problem of identifying the discrete-time L\'evy system described in (\ref{eq:disc_levy}).
%
%
%
%
Fix a realization of $A$ in its innovation form, i.e. assume that $A$ and its inverse are exponentially stable. The estimated innovation process ($\varepsilon_n(\theta)$)
is defined via the inverse filter:
\begin{align}
d\hat{X}_t&=\left(H-KL\right)\hat{X}_t dt + KdY_t \\
d\varepsilon_t&=dY_t-Ld\hat{X}_t,
\end{align}
for continuous time models. For discrete time L\'evy systems we define the innovation process by
\begin{equation}
\label{eq:inverse}
\varepsilon_n (\theta)=A^{-1}(\theta) \Delta y_n,
\end{equation}
with zero initial conditions and $\theta \in D_{\rho}.$ Let $\varepsilon^*_n(\theta)$ denote the stationary solution of (\ref{eq:inverse}) when $-\infty<n<\infty.$ In general, the notation $(.)^*$ will be used throughout this paper if the
corresponding stochastic process is obtained by passing through a
stationary process through an exponentially stable linear filter
starting at $- \infty$, as opposed to initializing the filter at
time $0$ with some arbitrary initial condition, which is typically
zero. Then we have for $n \ge 0$
\begin{equation}
\label{eq:statdiff}
\varepsilon^*_n(\theta)=\varepsilon_n(\theta)+r_n,
\end{equation}
where $r_n=O_M^Q(\alpha^n)$ with some $0<\alpha<1$, meaning that for all $1 \leq q <Q$
$$ \sup_{n} \alpha^{-n} \E^{1/q} \left|r_n\right|^q < \infty.$$
We will use this notation in a more general way:
\begin{definition}
For a stochastic process $X_n,$ and a function
$f:\mathbb{Z}\rightarrow \mathbb{R}^+$ we say that
$$ X_n=O_{M}^{Q}(f(n)) $$ if for all $1 \leq q \leq Q$
$$ \sup_{n} \frac{\E^{1/q}\left|X_n\right|^q}{f(n)} < \infty $$
holds.
\end{definition}
%
%
The score functions to be used following the basic idea of the ECF method are defined as
\begin{align}
    h_n(u;\theta,\eta)= e^{iu \varepsilon_n (\theta)}-\varphi(u,\eta) \\
    h^*_n(u;\theta,\eta)= e^{iu \varepsilon^*_n (\theta)}-\varphi(u,\eta)
\end{align}
with $u \in \mathbb{R}.$
These are indeed appropriate score functions, since we obviously have
$$ \E   \left[h^*_n(u;\theta^*,\eta^*)\right]=0,$$
and $$ h_n(u;\theta^*,\eta^*)=h^*_n(u;\theta^*,\eta^*)+O^Q_M(\alpha_n).$$
While $h_n$ is the function that can be computed in practice, $h^*_n$ is easier to handle, because its stationarity.
Following the philosophy of the ECF method take a fix set $u_i$-s, and define the
$k$-dimensional vector
$$h_n(\theta,\eta)=\left(h_n(u_1;\theta,\eta),\dots, h_n(u_k;\theta,\eta)\right)^T.$$
Let $K>0$ be a fixed symmetric, positve definite $k \times k$ weighting matrix. Since the system of equations
$$
h_n(\theta,\eta)=0 \quad n=1,\ldots,N
$$
is overdetermined we seek a least-square solution. Therefore we define
the cost functions as

$$V_N=V_N(\theta,\eta)=\sum_{n=1}^N |K^{-1/2}h_n(\theta,\eta)|^2,$$

$$V^*_N=V^*_N(\theta,\eta)=\sum_{n=1}^N |K^{-1/2}h^*_n(\theta,\eta)|^2,$$
and by solving
\begin{eqnarray}
V_{N \theta}(\theta,\eta)=0 \\
V_{N \eta}(\theta,\eta)=0
\label{eq:score2_2}
\end{eqnarray}
we obtain the estimation $\hat{\theta}_N$ and $\hat{\eta}_N$ of $\theta^*$ and $\eta^*$, respectively.

\section{Analysis}
\label{sec:ECF_proofs}
Differentiating $V^*_N$ w.r.t $\theta$ and $\eta$ we get the equations
%
%
\begin{align}\label{eq:Q_der_1}
V^*_{N \theta}(\theta,\eta)=\sum_{n=1}^N \left(h^{T*}_{n \theta}(\theta,\eta)K^{-1}\bar{h^*}_n(\theta,\eta)+ h^{T*}_{n}(\theta,\eta)K^{-1}\bar{h^*}_{n \theta}(\theta,\eta)\right) &= 0,\\ \label{eq:Q_der_2}
V^*_{N \eta}(\theta,\eta)=\sum_{n=1}^N \left(h^{T*}_{n \eta}(\theta,\eta)K^{-1}\bar{h^*}_n(\theta,\eta)+ h_{n}^{T*}(\theta,\eta)K^{-1}\bar{h^*}_{n \eta}(\theta,\eta) \right)&= 0,
\end{align}
where $\bar{h}$ is the conjugate of $h.$
Note that, setting $\theta=\theta^*$, the second equation is just
the optimality condition of the ECF method for i.i.d. samples \cite{CARRASCO-EFFEMPIRCHAR}. As for the first equation, the derivative of the score function $h$ with respect to $\theta$ is
\begin{equation}
    h_{n \theta}(u,\theta,\eta)=e^{iu\varepsilon_n(\theta)} iu \varepsilon_{n \theta}(\theta).
\end{equation}
Hence in the first equation $h_{n\theta}(\theta,\eta)$ and
$h_n(\theta,\eta)$ are not independent. However, the next lemma shows that their stationary approximation, $h^*_{n \theta}(\theta,\eta)$ and $h^*_n(\theta,\eta),$ are uncorrelated.

\begin{lemma}
\label{lemma:exp} For any $\eta$ we have $\E \left[V^*_{N \theta}(\theta^*,\eta)\right]=0$, and in addition $\E \left[V^*_{N \eta}(\theta^*,\eta^*)\right]=0.$
\end{lemma}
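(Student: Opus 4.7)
The plan is to exploit the innovations structure of the system at $\theta=\theta^*$ together with the independence of L\'evy increments. Observe first that at $\theta=\theta^*$ the inverse filter reproduces the driving noise exactly, so $\varepsilon^*_n(\theta^*)=\Delta Z_n$; hence $h^*_n(u;\theta^*,\eta)=e^{iu\Delta Z_n}-\varphi(u,\eta)$ is a function of $\Delta Z_n$ alone, and at $\eta=\eta^*$ its expectation vanishes by the very definition of the characteristic function, $\E[e^{iu\Delta Z_n}]=\varphi(u,\eta^*)$.

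The key technical ingredient is that the sensitivity $\varepsilon^*_{n\theta}(\theta^*)$ is independent of $\Delta Z_n$ and has zero mean. Writing $\varepsilon^*_n(\theta)=A^{-1}(\theta)A(\theta^*)\Delta Z_n$ and differentiating in $\theta$ at $\theta^*$ gives $\varepsilon^*_{n\theta}(\theta^*)=-A^{-1}(\theta^*)A_\theta(\theta^*)\Delta Z_n$. Because $A(\theta)$ is monic in the innovations state-space form used here (the feedthrough from $\Delta Z_n$ to $\Delta y_n$ is the identity), the operator $A^{-1}(\theta^*)A_\theta(\theta^*)$ has no instantaneous term and is strictly causal, so $\varepsilon^*_{n\theta}(\theta^*)$ is a linear combination of $\{\Delta Z_k:k<n\}$. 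Equivalently one may differentiate the recursions $\hat X^*_{n+1}(\theta)=(H(\theta)-K(\theta)L(\theta))\hat X^*_n(\theta)+K(\theta)\Delta y_n$ and $\varepsilon^*_n(\theta)=\Delta y_n-L(\theta)\hat X^*_n(\theta)$ directly and read off the same causal dependence. Since every $\Delta Z_k$ has zero mean, linearity yields $\E[\varepsilon^*_{n\theta}(\theta^*)]=0$.

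For the $\theta$-identity, the $(i,j)$-contribution to $h^{T*}_{n\theta}(\theta^*,\eta)K^{-1}\bar{h^*}_n(\theta^*,\eta)$ reads
\[
iu_i\,K^{-1}_{ij}\,\varepsilon^{*T}_{n\theta}(\theta^*)\,e^{iu_i\Delta Z_n}\bigl(e^{-iu_j\Delta Z_n}-\bar\varphi(u_j,\eta)\bigr);
\]
by the preceding step the expectation factors across the two independent groups and is killed by $\E[\varepsilon^*_{n\theta}(\theta^*)]=0$, and the conjugate term vanishes by the same argument, so $\E[V^*_{N\theta}(\theta^*,\eta)]=0$ for every $\eta$. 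For the $\eta$-identity, $h^*_{n\eta}(u;\theta,\eta)=-\varphi_\eta(u,\eta)$ is deterministic while $\E[h^*_n(\theta^*,\eta^*)]=0$, so both pieces of each summand in $V^*_{N\eta}(\theta^*,\eta^*)$ have vanishing mean. The only genuinely delicate point I anticipate is the strict-causality claim for $\varepsilon^*_{n\theta}(\theta^*)$, i.e.\ that the direct feedthrough of $\Delta Z_n$ cancels on differentiation; once this is in hand the remainder is bookkeeping with the independence of L\'evy increments.
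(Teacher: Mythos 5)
Your proof is correct and follows essentially the same route as the paper: both arguments hinge on the facts that $\varepsilon^*_{n\theta}(\theta^*)$ depends only on $\{\Delta Z_k : k<n\}$ (hence is independent of $\Delta Z_n$) and has zero mean because $\E[\Delta Z_n]=0$, while $h^*_{n\eta}$ is deterministic and $\E[h^*_n(\theta^*,\eta^*)]=0$. The only cosmetic difference is that you factor the expectation directly via independence (and justify strict causality via the monic feedthrough), whereas the paper phrases the same step as conditioning on $\mathscr{F}^{\Delta Z}_{n-1}$ and applying the tower rule.
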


\begin{proof}



Consider the $n^{th}$ term in (\ref{eq:Q_der_1}). We have
\begin{align}\nonumber
& \E\left(h^{*T}_{n\theta}(\theta^*,\eta)K^{-1}\bar{h}^*_n(\theta^*,\eta)\right)= \\
&=\sum_{l,m=1}^k K_{l,m}^{-1} \mathbb{E} \left[\left(e^{iu_l\varepsilon^*_n(\theta^*)} iu_l \varepsilon^*_{n\theta}(\theta^*)\right) \left(e^{-iu_m \varepsilon^*_{n} (\theta^*)}-\varphi(-u_m,\eta)\right)\right].
\end{align}
Compute the $(l,m)^{th}$ term using the tower law:
\begin{align}
\mathbb{E} \left[\left(e^{iu_l\varepsilon^*_n(\theta^*)} iu_l \varepsilon^*_{n \theta}(\theta^*)\right) \left(e^{-iu_m \varepsilon^*_n (\theta^*)}-\varphi(-u_m,\eta)\right)\right]= \nonumber \\
=\mathbb{E} \left[ \mathbb{E} \left[ \left(e^{iu_l\varepsilon^*_n(\theta^*)} iu_l \varepsilon^*_{n \theta}(\theta^*)\right) \left(e^{-iu_m \varepsilon^*_n (\theta^*)}-\varphi(-u_m,\eta)\right)| \mathscr{F}_{n-1}^{\Delta Z}\right]\right], \label{eq:E_calc2}
\end{align}
 where $\mathscr{F}_{n-1}^{\Delta Z}=\sigma \left\{ \Delta Z_k: k \leq n-1 \right\} .$
Here we used that $\overline{\varphi}(u,\eta)=\varphi(-u,\eta).$ Due to the fact that $\varepsilon^*_{n \theta}(\theta^*)$ is $\mathscr{F}_{n-1}^{\Delta Z}$ measurable, (\ref{eq:E_calc2}) can be written as
\begin{equation} \nonumber
\begin{split}
&\mathbb{E} \left[ iu_l \varepsilon^*_{n\theta}(\theta^*)\E\left[e^{i(u_l-u_m)\varepsilon^*_n(\theta^*)}-e^{iu_l\varepsilon^*_n(\theta^*)}\varphi(-u_m,\eta) | \mathscr{F}_{n-1}^{\Delta Z}\right]\right]= \\
&\mathbb{E} \left[ iu_l \varepsilon^*_{n \theta}(\theta^*)\left(\varphi(u_l-u_m,\eta^*)-\varphi(u_l,\eta^*)\varphi(-u_m,\eta)\right)\right]= \\
&\left(\varphi(u_l-u_m,\eta^*)-\varphi(u_l,\eta^*)\varphi(-u_m,\eta)\right) \mathbb{E} \left[ iu_l \varepsilon^*_{n \theta}(\theta^*)\right]=0.
\end{split}
\end{equation}
To reduce the last equation we used that $\E \left[\Delta Z_n\right]=0.$

Similarly for the $n^{th}$ term of (\ref{eq:Q_der_2}) we have
$$h^*_{n \eta}(u,\theta,\eta)=-\varphi_{\eta}(u,\eta),$$  which is non-random implying that
\begin{equation}\nonumber
\E \left[h^*_{n\eta}(u,\theta^*,\eta^*)K^{-1}\bar{h}^*_n(u,\theta^*,\eta^*)\right]=0.
\end{equation}
\qed
\end{proof}

The previous lemma also shows that the gradient of $V_N(\theta,\eta)$ serves as an alternative score function.
The following corollary is implied by the fact that
\begin{equation}
\E\left[h_{n\theta}^T(\theta^*,\eta)K^{-1}\bar{h}_n(\theta^*,\eta)\right]=\E\left[h_{n\theta}^{*T}(\theta^*,\eta)K^{-1}\bar{h}^*_n(\theta^*,\eta)\right]+O_M^Q(\alpha^n).
\end{equation}

\begin{corollary}
For any $\eta$ we have $\E \left[V_{N \theta}(\theta^*,\eta)\right]=O^{Q}_M(\alpha^N)$, and in addition \\ $\E \left[V_{N \eta}(\theta^*,\eta^*)\right]=O^{Q}_M(\alpha^N).$
\end{corollary}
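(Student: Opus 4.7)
The plan is to derive the corollary from Lemma \ref{lemma:exp} together with the approximation identity quoted immediately above the statement, which relates the $n$-th summands of $V_{N\theta}$ and $V^*_{N\theta}$. I would carry this out in three steps.

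First, I would strengthen the conclusion of Lemma \ref{lemma:exp} to a \emph{termwise} statement. A careful reading of the tower-law calculation given there shows that, for every fixed $n$,
$$\E\!\left[h^{*T}_{n\theta}(\theta^*,\eta)K^{-1}\bar{h}^*_n(\theta^*,\eta)\right]=0,$$
since the decisive factor $\E[iu_l\,\varepsilon^*_{n\theta}(\theta^*)]$ vanishes irrespective of $n$ and $\eta$. Taking the complex conjugate gives the vanishing of the companion term in the sum, and the analogous termwise identity at $\eta=\eta^*$ in the $\eta$-slot is even easier because $h^*_{n\eta}(u,\theta,\eta)=-\varphi_\eta(u,\eta)$ is deterministic. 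So no additional computation beyond what is already in Lemma \ref{lemma:exp} is needed at this step.

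Second, I would justify the displayed approximation identity itself. From (\ref{eq:statdiff}) we have $\varepsilon_n(\theta^*)-\varepsilon^*_n(\theta^*)=O^Q_M(\alpha^n)$, and differentiating the inverse filter (\ref{eq:inverse}) in $\theta$ propagates the same exponential decay to $\varepsilon_{n\theta}(\theta^*)-\varepsilon^*_{n\theta}(\theta^*)$, provided the sensitivity equation is exponentially stable, which is guaranteed by Condition 2. A first-order Taylor expansion of the map $\varepsilon\mapsto e^{iu\varepsilon}$ then yields $h_n-h^*_n=O^Q_M(\alpha^n)$ and, by the product rule, $h_{n\theta}-h^*_{n\theta}=O^Q_M(\alpha^n)$. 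Expanding the bilinear form and applying Cauchy--Schwarz, with $|h^*_n|\le 2$ used pointwise and a uniform-in-$n$ $L^q$-bound on $\varepsilon^*_{n\theta}(\theta^*)$ obtained from Conditions 1 and 2, produces the claimed summand-by-summand estimate in expectation.

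Combining the two previous steps, every summand of $\E V_{N\theta}(\theta^*,\eta)$ is of order $\alpha^n$, and summing over $n=1,\ldots,N$ delivers the bound stated in the corollary. The second half, $\E V_{N\eta}(\theta^*,\eta^*)=O^{Q}_M(\alpha^N)$, is handled in the same way; it is actually simpler, since $h^*_{n\eta}$ is deterministic and only the factor $h_n-h^*_n$ needs to be controlled in the corresponding bilinear expression. The main technical obstacle is the uniform $L^q$-bound on the sensitivity process $\varepsilon^*_{n\theta}(\theta^*)$ used in step two: establishing it in the L\'evy-driven setting requires combining exponential inverse stability of $A(\theta)$ and of its $\theta$-derivative (Condition 2) with the moment assumption (\ref{eq:moments}), and is the part of the argument that warrants the most care.
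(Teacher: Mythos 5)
Your route is the same as the paper's: the authors justify the corollary in a single line by asserting the termwise identity
\begin{equation*}
\E\left[h_{n\theta}^T(\theta^*,\eta)K^{-1}\bar{h}_n(\theta^*,\eta)\right]=\E\left[h_{n\theta}^{*T}(\theta^*,\eta)K^{-1}\bar{h}^*_n(\theta^*,\eta)\right]+O_M^Q(\alpha^n),
\end{equation*}
and then invoking Lemma \ref{lemma:exp}, whose proof is already termwise (the decisive factor $\E[iu_l\varepsilon^*_{n\theta}(\theta^*)]$ is shown to vanish for each fixed $n$). Your steps one and two simply make explicit what this one-liner presupposes: the exponential decay of $\varepsilon_n-\varepsilon^*_n$ and $\varepsilon_{n\theta}-\varepsilon^*_{n\theta}$ coming from (\ref{eq:statdiff}) and the stability of the derivative filters under Condition 2, propagated through $\varepsilon\mapsto e^{iu\varepsilon}$ and the product, with H\"older/Cauchy--Schwarz and the $M$-boundedness of $\varepsilon^*_{n\theta}(\theta^*)$ closing the estimate. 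That elaboration is sound and is exactly the content the paper leaves implicit (one could quibble that the product of two order-$Q$ quantities is only controlled at order $Q/2$, but since the corollary concerns expectations this is immaterial).

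The genuine problem is your final step. Summing termwise bounds of size $O(\alpha^n)$ over $n=1,\ldots,N$ gives
\begin{equation*}
\sum_{n=1}^{N}C\alpha^{n}\le \frac{C\alpha}{1-\alpha}=O(1),
\end{equation*}
which is \emph{not} $O(\alpha^N)$: the partial sums converge to a positive constant while $\alpha^N\to 0$, so the ratio blows up. Your claim that "summing over $n=1,\ldots,N$ delivers the bound stated in the corollary" is therefore false as written. To be fair, the defect is in the paper's own statement of the corollary — the argument sketched there yields exactly the same $O(1)$ bound and nothing stronger — and an $O(1)$ bound on $\E[V_{N\theta}(\theta^*,\eta)]$ is all that is used downstream (in Lemma \ref{lemma:N-1/2} only $V_{N\rho}(\rho^*)=O_M^{Q/2}(N^{1/2})$ is needed). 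But a correct proof must either state the conclusion as $O(1)$ or supply an additional argument, not present in your proposal or in the paper, showing that the termwise errors themselves telescope or cancel down to order $\alpha^N$.
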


Define $\rho=(\theta,\eta),$ and define the asymptotic cost function by
$$ W(\theta, \eta)=W(\rho)=\E \left|K^{-1/2} h^*_n(\rho)\right|^2.$$
\textbf{Condition 3}
The equation $W_{\rho}(\rho)=0$ has a unique solution in $D_{\rho}^*.$

A crucial object is the Hessian of $W$ at $\rho=\rho^*:$
$$ R^*=W_{\rho \rho}(\rho^*). $$
It is easy to see that
$$R^*=
\begin{pmatrix}
W_{\theta \theta}(\theta^*) & 0 \\
0 & W_{\eta \eta}(\eta^*)
\end{pmatrix}
$$ is block diagonal matrix.









The following result provides a precise characterization of the estimation error:
\begin{theorem}
\label{th:difference}
Under Conditions 1,2 and 3 we have
$$ \hat{\rho}_N-\rho^*=-(R^*)^{-1}\frac{1}{N} V_{N \rho }(\rho^*)+O^{Q/(2(p+q))}_M(N^{-1})$$
\end{theorem}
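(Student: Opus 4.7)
The plan is to follow the standard M-estimator linearization, adapted to the moment-bound calculus $O_M^Q$ used throughout the paper. The starting point is the first-order condition $V_{N\rho}(\hat{\rho}_N)=0$, which I would Taylor-expand around $\rho^*$ after first confining $\hat{\rho}_N$ to a small neighborhood of $\rho^*$.

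First I would establish consistency. The stationary process $\varepsilon_n^*(\theta)$ is mixing and $h_n^*(u;\rho)$ is bounded in $n$, so a uniform law of large numbers applied on the compact set $D_\rho$ gives
$$ \sup_{\rho\in D_\rho}\bigl|\tfrac{1}{N} V_N^*(\rho) - W(\rho)\bigr| = O_M^{Q/(2(p+q))}(N^{-1/2}). $$
Via the stationarity defect (\ref{eq:statdiff}) together with $r_n = O_M^Q(\alpha^n)$, the same estimate transfers from $V_N^*$ to $V_N$. Condition 3 (unique minimizer of $W$ in $D_\rho^*$) and compactness then force $\hat{\rho}_N \to \rho^*$ in the $O_M$ sense, so eventually $\hat{\rho}_N$ lies in an interior neighborhood of $\rho^*$ where the Taylor argument is legitimate.

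Second, I would Taylor-expand in integral form,
$$ 0 = V_{N\rho}(\hat{\rho}_N) = V_{N\rho}(\rho^*) + B_N\,(\hat{\rho}_N-\rho^*),\quad B_N := \int_0^1 V_{N\rho\rho}\bigl(\rho^* + s(\hat{\rho}_N-\rho^*)\bigr)\,ds. $$
A further LLN bound on the stationary second-derivative process $V_{N\rho\rho}^*(\rho^*)$, combined with the local Lipschitz continuity of $\rho\mapsto V_{N\rho\rho}(\rho)/N$ and the consistency just proved, yields $B_N/N = R^* + \Delta_N$ with $\Delta_N = O_M^{Q/(2(p+q))}(N^{-1/2})$. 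Using the Neumann identity $(R^*+\Delta_N)^{-1} = (R^*)^{-1} - (R^*)^{-1}\Delta_N(R^*+\Delta_N)^{-1}$, together with $V_{N\rho}(\rho^*)/N = O_M^{Q/(2(p+q))}(N^{-1/2})$ (by Lemma \ref{lemma:exp}, which centers the summands, and a CLT-type moment bound for partial sums of the stationary sequence appearing in (\ref{eq:Q_der_1})--(\ref{eq:Q_der_2})), the cross term is of order $N^{-1}$ in the $O_M$ sense, which is the claimed remainder.

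The delicate point, and the main obstacle, is tracking the exact moment exponent $Q/(2(p+q))$ through this chain. The inverse filter (\ref{eq:inverse}) and its first two derivatives in $\rho$ propagate the moment bounds of $\Delta Z_n$ (Condition 1) via exponentially decaying convolutions, so that $\varepsilon_{n\rho\rho}^*(\rho^*) = O_M^Q(1)$; differentiating $h_n^*$ twice in $\rho$ produces products of up to $2(p+q)$ such factors, where $p = \dim\theta$ and $q = \dim\eta$; applying H\"older's inequality to compute the $s$-th moment of such a product costs roughly $2(p+q)$ factors of $s$ in the required moment order, reducing the usable moment exponent from $Q$ to $Q/(2(p+q))$. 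Once this book-keeping is done carefully and uniformly over a neighborhood of $\rho^*$, multiplying the two $O_M^{Q/(2(p+q))}(N^{-1/2})$ factors produces exactly the $O_M^{Q/(2(p+q))}(N^{-1})$ remainder stated in the theorem.
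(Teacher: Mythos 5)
Your overall architecture is the same as the paper's: confine $\hat{\rho}_N$ to a small sphere around $\rho^*$ via a uniform law of large numbers and Condition 3, Taylor-expand the first-order condition in integral form, show the averaged Hessian converges to $R^*$, invert, and bound the product of the two error terms. Up to that level the proposal is sound and matches the paper's Lemmas on $L$-mixing, on the localization of the root of $V_{N\rho}=0$, and on the $O_M^{Q/2}(N^{-1/2})$ rate, followed by the final bootstrap.

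The concrete flaw is in what you yourself single out as the delicate point: the accounting for the exponent $Q/(2(p+q))$. You attribute it to H\"older's inequality applied to ``products of up to $2(p+q)$ factors'' produced by differentiating $h_n^*$ twice. That is not what happens: two differentiations of $e^{iu\varepsilon_n(\theta)}$ produce products of at most three random factors ($e^{iu\varepsilon_n}$, $\varepsilon_{n\rho}$, $\varepsilon_{n\rho}$ or $\varepsilon_{n\rho\rho}$), a count that is independent of $\dim\rho$, so your mechanism would not yield a denominator growing with $p+q$. In the paper the factor $2$ comes from the cost being a quadratic form $|K^{-1/2}h_n|^2$, a product of two $L$-mixing processes of order $Q$, hence $L$-mixing of order $Q/2$; the factor $p+q$ comes from the maximal inequality behind Theorem \ref{thm:sup}, where taking the supremum over the $(p+q)$-dimensional compact set $D_\rho$ degrades $O_M^{Q/2}$ to $O_M^{Q/(2(p+q))}$. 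A related secondary slip: you assign $V_{N\rho}(\rho^*)/N$ the order $O_M^{Q/(2(p+q))}(N^{-1/2})$, but at the fixed point $\rho^*$ no supremum over the parameter is taken, so the moment inequality for $L$-mixing sums (Theorem \ref{thm:ineq}) gives the stronger $V_{N\rho}(\rho^*)=O_M^{Q/2}(N^{1/2})$; the paper needs this stronger order when multiplying it against the $O_M^{Q/(2(p+q))}(N^{-3/2})$ bound on the inverse-Hessian error, since otherwise the H\"older step in the final product would degrade the exponent of the remainder below the one stated in the theorem.
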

First, we prove some lemmas that will be used in the proof of Theorem \ref{th:difference}. For the definition of $L$-mixing processes and for other corresponding definitions and theorems see the Appendix.

\begin{lemma}
Under Conditions 1,2,3 processes
$\varepsilon_n(\theta),\varepsilon_{n\theta}(\theta)$ and $\varepsilon_{n\theta\theta}(\theta)$ are $L$-mixing uniformly of order $Q$.
\end{lemma}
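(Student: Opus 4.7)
The plan is to exhibit each of the three processes as the output of a parameter-dependent, uniformly exponentially stable linear filter driven by the i.i.d. sequence $\Delta Z_n$, and then invoke the standard fact that such filters preserve the $L$-mixing property (with uniform bounds on compact parameter sets). By Condition 1 the noise satisfies $\mathbb{E}|\Delta Z_n|^q < \infty$ for $1 \le q \le Q$, and since the $\Delta Z_n$ are i.i.d. they form an $L$-mixing sequence of order $Q$ trivially (the $\gamma_q$-coefficient vanishes for lags $\ge 1$). The dynamic part of the argument then reduces to controlling the impulse responses.

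First I would treat $\varepsilon_n(\theta)$. Substituting $\Delta y_n = A(\theta^*)\Delta Z_n$ into (\ref{eq:inverse}) yields $\varepsilon_n(\theta) = A^{-1}(\theta) A(\theta^*) \Delta Z_n$ (modulo the transient $r_n = O_M^Q(\alpha^n)$ of (\ref{eq:statdiff})). By Condition 2 the compound filter $A^{-1}(\theta) A(\theta^*)$ is exponentially stable, and since $D_\rho \subset G_\rho$ is compact and $\theta \mapsto A(\theta), A^{-1}(\theta)$ are continuous, the impulse response coefficients decay at a rate $C\beta^k$ with constants $C, \beta$ independent of $\theta \in D_\rho$. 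Combining this uniform geometric decay with the bounded $q$-th moments of $\Delta Z_n$ gives the usual moment and $\gamma_q$-bounds for the output, uniformly in $\theta$, which is precisely uniform $L$-mixing of order $Q$.

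For $\varepsilon_{n\theta}(\theta)$ and $\varepsilon_{n\theta\theta}(\theta)$ I would differentiate inside the filter, using the identity $\partial_\theta (A^{-1}) = -A^{-1}(\partial_\theta A) A^{-1}$ (and its second-order analogue). This expresses $\varepsilon_{n\theta}(\theta)$ as the output of a cascade of the stable filters $A^{-1}(\theta)$, $\partial_\theta A(\theta)$, $A^{-1}(\theta)$ and $A(\theta^*)$ applied to $\Delta Z_n$, and similarly for the second derivative with one extra differentiation. Each factor is stable and depends smoothly on $\theta$, hence the same uniform geometric decay argument on $D_\rho$ applies. The only additional point is that differentiating a geometric impulse response produces at most a polynomial prefactor in the lag, which is absorbed by a slightly smaller decay rate; this is a standard step in the Gerencs\'er framework for $L$-mixing and does not affect the order of mixing.

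The main obstacle I expect is precisely the \emph{uniformity} in $\theta$: one must verify that the exponential decay rates and the constants in the moment inequalities of the filter outputs can be chosen independently of $\theta \in D_\rho$. This follows from the compactness of $D_\rho$ together with Condition 2 (which places $D_\rho$ strictly inside the open stability region $G_\theta$), and from the fact that the stability margin of $A^{-1}(\theta)$ depends continuously on $\theta$; but the bookkeeping for the derivative filters, where higher derivatives of $A^{-1}(\theta)$ generate sums of products, is the least automatic part of the argument. Once this is set up, Condition 3 is not actually needed in the lemma itself, and the conclusion follows by direct application of the $L$-mixing preservation results for exponentially stable linear filters.
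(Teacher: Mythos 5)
Your proposal is correct and takes essentially the same route as the paper: both arguments start from the trivial $L$-mixing of the i.i.d.\ increments $\Delta Z_n$, represent $\varepsilon_n(\theta)$ and its $\theta$-derivatives as outputs of uniformly exponentially stable, parameter-dependent filters (the paper via $\Delta y_n = A(\theta^*)\Delta Z_n$ followed by $A^{-1}(\theta)$, $A^{-1}_\theta(\theta)$, $A^{-1}_{\theta\theta}(\theta)$), and conclude by the preservation of uniform $L$-mixing under such filters as in \cite{gerencs_mixing}. Your additional remarks on the uniformity over the compact set $D_\rho$ and on Condition 3 being unnecessary are consistent with, and slightly more explicit than, the paper's argument.
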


\begin{proof}
First, note that since $\Delta y_n=\sum_{i=0}^q a_i(\theta^*)\Delta Z_i,$ holds, $\Delta y_n$ is a linear combination of $L$-mixing processes of order $Q.$ Using the fact that an uniformly exponentially stable filter with $L$-mixing input produces an uniformly $L$-mixing output \cite{gerencs_mixing} we get that $\Delta y_n$ is $L$-mixing processes of order $Q$ for each $n.$  The innovation process and its derivatives with respect to $\theta$ can be written as
\begin{align*}
\varepsilon_n(\theta)&=A^{-1}(\theta) \Delta y_i \\
\varepsilon_{n\theta}(\theta)&=A_{\theta}^{-1}(\theta) \Delta y_i \\
\varepsilon_{n\theta\theta}(\theta)&=A_{\theta \theta}^{-1}(\theta) \Delta y_i.
\end{align*}
Again, since $A^{-1}(\theta)$ and its derivative with respect to $\theta$ are uniformly exponentially stable we conclude the lemma.
\qed
\end{proof}

\begin{lemma}
Suppose that 
Conditions 1,2,3 hold.
Then for any given $d>0$ the equation $V_{N \rho}(\rho)=0$ has a unique solution in $D_{\rho}$ and it is in the sphere $S=\left\{\rho:\left|\rho-\rho^*\right|<d\right\}$ with probability at least $1-O(N^{-s})$ for any $0<s\leq Q/2.$ Furthermore the constant $C$ in $O(N^{-s})=CN^{-s}$ depends only on $d$ and $s.$
\end{lemma}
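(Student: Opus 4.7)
The plan is to combine a uniform convergence estimate for the gradient $\frac{1}{N} V_{N\rho}(\rho) \to W_\rho(\rho)$ on the compact set $D_\rho$ with a local invertibility argument near $\rho^*$. With high probability the uniform estimate rules out critical points in $D_\rho \setminus S$, while positive-definiteness of $\frac{1}{N} V_{N\rho\rho}$ in a neighbourhood of $\rho^*$ gives both uniqueness inside $S$ and, via a Newton/fixed-point step, the fact that the unique critical point actually sits inside $S$.

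\textbf{Moment and uniform convergence bounds.} By the preceding lemma, $\varepsilon_n(\theta), \varepsilon_{n\theta}(\theta), \varepsilon_{n\theta\theta}(\theta)$ are uniformly $L$-mixing of order $Q$, so the same holds for the summands of $V_{N\rho}(\rho)$ and $V_{N\rho\rho}(\rho)$ (recall that $\eta$-derivatives of $h_n$ are non-random). Applying the moment inequality for sums of $L$-mixing processes from the Appendix, together with the corollary $\E V_{N\rho}(\rho^*) = O(\alpha^N)$ that removes the mean, gives the pointwise estimate
$$ \E\, \Bigl| \tfrac{1}{N} V_{N\rho}(\rho^*) \Bigr|^q \le C_q\, N^{-q/2}, \qquad 1 \le q \le Q. $$
Markov's inequality then yields $P(|\frac{1}{N} V_{N\rho}(\rho^*)| > \lambda) \le C_q \lambda^{-q} N^{-q/2}$, i.e.\ an $O(N^{-s})$ tail for any $s \le Q/2$. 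A Sobolev-type embedding on $D_\rho$, bounding $\|\cdot\|_\infty$ by $L^q$-norms of the function and its $\rho$-derivative, upgrades the pointwise bound to the uniform estimate
$$ \sup_{\rho \in D_\rho} \Bigl| \tfrac{1}{N} V_{N\rho}(\rho) - W_\rho(\rho) \Bigr| + \sup_{\rho \in D_\rho} \Bigl\| \tfrac{1}{N} V_{N\rho\rho}(\rho) - W_{\rho\rho}(\rho) \Bigr\| = O_M^{Q}(N^{-1/2}), $$
again with an $O(N^{-s})$ Markov tail.

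\textbf{Localization and uniqueness.} By Condition 3 together with continuity of $W_\rho$ and compactness of $D_\rho$, for any $d > 0$ there is $\delta_0 > 0$ with $|W_\rho(\rho)| \ge \delta_0$ on $D_\rho \setminus S$ (shrinking $d$ if needed so that $S \subset D_\rho^*$). Since $R^* = W_{\rho\rho}(\rho^*)$ is positive definite (it is the Hessian at the unique minimizer), we may further shrink $d$ so that $W_{\rho\rho}(\rho) \succeq \frac{1}{2} R^*$ on $S$. On the event $\Omega_N$ where both suprema above are less than $\min(\delta_0/2, \lambda_{\min}(R^*)/4)$ --- an event of probability at least $1 - O(N^{-s})$ with constant depending only on $d$ and $s$ --- one concludes: (a) $|\frac{1}{N} V_{N\rho}(\rho)| \ge \delta_0/2$ on $D_\rho \setminus S$, so no critical point lies there; (b) $\frac{1}{N} V_N$ is strictly convex on $S$, giving uniqueness inside $S$; and (c) using the pointwise estimate $|\frac{1}{N} V_{N\rho}(\rho^*)| \le \delta_0/2$ together with a Banach fixed-point argument applied to the Newton map $\rho \mapsto \rho - (\frac{1}{N} V_{N\rho\rho}(\rho^*))^{-1} \frac{1}{N} V_{N\rho}(\rho)$, a critical point is produced inside $S$.

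\textbf{Main obstacle.} The delicate step is upgrading the pointwise moment bounds to the uniform-in-$\rho$ estimate while retaining the $O(N^{-s})$ probability rate for every $s \le Q/2$. This requires the Sobolev-type embedding and therefore one more $\rho$-derivative of the summands than is asserted in the preceding lemma; the same exponential-stability argument (using Condition 2 on $A(\theta)^{-1}$) shows that $\varepsilon_{n\theta\theta\theta}$ is uniformly $L$-mixing, which suffices. The remaining bookkeeping tracks how the constants depend on $d$ (through $\delta_0$ and $\lambda_{\min}(R^*)$) and on $s$ (through $C_q$), but not on $N$, as required.
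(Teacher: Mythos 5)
Your proposal follows essentially the same route as the paper: uniform convergence of $\frac{1}{N}V_{N\rho}$ and $\frac{1}{N}V_{N\rho\rho}$ to $W_{\rho}$ and $W_{\rho\rho}$ via the $L$-mixing machinery (the appendix's uniform law of large numbers for $L$-mixing processes packages exactly your moment-bound-plus-Sobolev-embedding step), exclusion of critical points outside $S$ by the positive lower bound $d'$ on $|W_{\rho}|$ there, and existence/uniqueness inside $S$ via a quantitative inverse-function argument (the paper cites its implicit function theorem where you run the equivalent Newton/contraction iteration); your remark that a third $\theta$-derivative of $\varepsilon_n$ is needed to get the uniform bound on $V_{N\rho\rho}$ is a correct refinement that the paper glosses over. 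The one slip is quantitative: the summands of $V_{N\rho}$ are products of two order-$Q$ $L$-mixing processes and the supremum over the $(p+q)$-dimensional parameter costs a further factor, so the uniform estimate is $O_M^{Q/(2(p+q))}(N^{-1/2})$ rather than $O_M^{Q}(N^{-1/2})$, yielding the Markov tail only for $s\le Q/(4(p+q))$ --- which is in fact all that the paper's own proof supports, despite the $s\le Q/2$ appearing in the statement.
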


\begin{proof}
First, note that since $\varepsilon_n,\varepsilon_{n\theta}$ and $\varepsilon_{n\theta\theta}$ are $L$-mixing processes uniformly of order $Q$, the processes $h_n,h_{n\rho}$ and $h_{n\rho\rho}$ are $L$-mixing uniformly of order $Q/2$ as well. It follows that the process
\begin{equation}
u_n(\rho)=\frac{\partial}{\partial \rho}\left(h^T_n(\rho)K^{-1/2}\bar{h}_n(\rho)\right)(\rho)-W_{\rho}(\rho)
\end{equation}
and its derivative with respect to $\rho$ are $L$-mixing uniformly of order $Q/2$.

$\E\left[u^*_n(\rho)\right]=0$ implies $\E\left[u_n(\rho)\right]=O^{Q/2}_M(\alpha^n)$ uniformly in $\rho$ and hence following Theorem \ref{thm:sup} we have for
\begin{align}
\delta V_{N\rho}= \sup_{\rho \in D_{\rho}} \left|\frac{1}{N} V_{N\rho}(\rho)-W_{\rho}(\rho)\right|, \\
\delta V_{N\rho\rho}= \sup_{\rho \in D_{\rho}} \left|\frac{1}{N} V_{N\rho\rho}(\rho)-W_{\rho\rho}(\rho)\right|,
\end{align}
$\delta V_{N\rho}=O^{Q/(2(p+q))}_M(N^{-1/2})$ and $\delta V_{N\rho\rho}=O^{Q/(2(p+q))}_M(N^{-1/2}).$
Thus, $$P(\delta V_{N\rho}>d)=O(N^{-s})$$
with any $0<s \leq Q/(4p+4q)$ by Markov's inequality. Applying the same argument yields
$$P(\delta V_{N\rho\rho}>d'')=O(N^{-s}),$$ for any $d''>0,$ and any $0<s \leq Q/(4p+4q).$

Suppose now that equation $V_{N \rho}(\rho)=0$ has a solution outside $S.$ Define
$$ d'=\inf \left\{\left|W_{\rho}(\rho)\right|:\rho \in D_{\rho}, \left|\rho-\rho^*\right|\geq d\right\}>0, $$
since $W_{\rho}$ is continuous and $D_{\rho}$ is compact. It follows that $\delta V_{N \rho}>d'$, and we have seen that this event has probability $O(N^{-s}).$
 So for $\Omega_N=\left\{\delta V_{N\rho}>d, \delta V_{N\rho\rho}>d'\right\}$ we have $$P(\Omega_N)>1-O(N^{-s})$$ with any $0<s \leq Q/2.$
The equation $W_{\rho}(\rho)$ has a unique solution in $D_{\rho}.$ Hence by using the implicit function theorem, see Theorem \ref{thm:implicit}, one can easily conclude that $V_{N \rho}(\rho)=0$ has a unique solution if $d'$ and $d''$ are sufficiently small.

\qed

\end{proof}

\begin{lemma}\label{lemma:N-1/2}
Under Conditions 1,2,3 we have $\hat{\rho}_N-\rho^{*}=O^{Q/2}_M(N^{-1/2}).$
\end{lemma}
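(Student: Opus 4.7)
The plan is to combine the standard M-estimator argument---a first-order Taylor expansion of the score equation $V_{N\rho}(\hat\rho_N)=0$ around $\rho^*$---with the high-probability localisation and uniform Hessian approximation already obtained in the previous lemma. The desired rate $O_M^{Q/2}(N^{-1/2})$ then reduces to a moment bound for $V_{N\rho}(\rho^*)$, which is a sum of $L$-mixing terms with (almost) zero mean.

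First, I would restrict to the event $\Omega_N$ on which the unique solution $\hat\rho_N \in D_\rho$ of $V_{N\rho}(\rho)=0$ lies in the ball $S=\{\rho:|\rho-\rho^*|<d\}$ and $\delta V_{N\rho\rho}$ is small. The previous lemma gives $P(\Omega_N^c)=O(N^{-s})$ for any $0<s\le Q/2$, and since $\hat\rho_N,\rho^*\in D_\rho$ is compact, the complementary contribution satisfies
$$\mathbb{E}\bigl[\mathbf{1}_{\Omega_N^c}|\hat\rho_N-\rho^*|^q\bigr]\le \operatorname{diam}(D_\rho)^q\,P(\Omega_N^c)=O(N^{-q/2})$$
for $1\le q\le Q/2$, which is consistent with the target rate. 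Hence it suffices to work on $\Omega_N$.

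On $\Omega_N$ I would write the integral form of the mean-value expansion
$$0 \;=\; V_{N\rho}(\hat\rho_N) \;=\; V_{N\rho}(\rho^*)\;+\;\bar V_{N\rho\rho}\,(\hat\rho_N-\rho^*),\qquad \bar V_{N\rho\rho}=\int_0^1 V_{N\rho\rho}\bigl(\rho^*+s(\hat\rho_N-\rho^*)\bigr)\,ds,$$
so that $\hat\rho_N-\rho^*=-[\tfrac1N\bar V_{N\rho\rho}]^{-1}\cdot\tfrac1N V_{N\rho}(\rho^*)$. Using the uniform bound $\delta V_{N\rho\rho}=O_M^{Q/(2(p+q))}(N^{-1/2})$ and the continuity of $W_{\rho\rho}$, on $\Omega_N$ we have $\tfrac1N\bar V_{N\rho\rho}\to R^*=W_{\rho\rho}(\rho^*)$. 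Condition~3 together with the block-diagonal structure noted after it guarantees that $R^*$ is nonsingular, so $[\tfrac1N\bar V_{N\rho\rho}]^{-1}$ is bounded by a deterministic constant for $N$ large enough (shrinking $d$ if necessary).

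The remaining task is to prove $\tfrac1N V_{N\rho}(\rho^*)=O_M^{Q/2}(N^{-1/2})$. Writing $V_{N\rho}(\rho^*)=\sum_{n=1}^N u_n$, the summands $u_n$ are products of the bounded factors $e^{iu_j\varepsilon_n(\theta^*)}-\varphi(u_j,\eta^*)$ and of the derivatives $\varepsilon_{n\theta}(\theta^*)$ and $\varphi_\eta(u_j,\eta^*)$; by the $L$-mixing lemma established just above, these summands are $L$-mixing uniformly of order $Q/2$. Lemma~\ref{lemma:exp} says the stationary version has $\mathbb{E}[u_n^*]=0$, and the transient-stationary comparison (\ref{eq:statdiff}) gives $|\mathbb{E}u_n|=O(\alpha^n)$, so $\sum\mathbb{E}u_n$ is absolutely summable and contributes only $O(1/N)$ to $\tfrac1N V_{N\rho}(\rho^*)$. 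Applying the $L$-mixing Burkholder-type moment inequality from the Appendix to the centred $L$-mixing sum $\sum(u_n-\mathbb{E}u_n)$ yields $\mathbb{E}^{1/q}\bigl|\sum_{n=1}^N (u_n-\mathbb{E}u_n)\bigr|^q=O(N^{1/2})$ for $1\le q\le Q/2$. Combining this with the bounded inverse Hessian finishes the proof on $\Omega_N$, and together with the exceptional-event estimate above, on all of $\Omega$.

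The main obstacle is the bookkeeping at paragraph three: one must verify carefully that the process $u_n$ really inherits $L$-mixing of the claimed order $Q/2$ (half of the moment order of the underlying innovations, because $u_n$ is bilinear in $h$ and its $\rho$-derivative), and that the $L$-mixing moment inequality applies at every admissible $q$ with constants independent of $N$. Once these are in hand, the rest of the argument is a routine M-estimator expansion.
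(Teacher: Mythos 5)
Your proposal is correct and follows essentially the same route as the paper: the integral mean-value expansion of $V_{N\rho}$ around $\rho^*$, the bound $V_{N\rho}(\rho^*)=O_M^{Q/2}(N^{1/2})$ via the $L$-mixing moment inequality, the uniform lower bound on the Hessian on $\Omega_N$, and the exceptional-event estimate $P(\Omega_N^c)=O(N^{-s})$. Your explicit centering of the summands before applying the moment inequality and the diameter bound on $\Omega_N^c$ are slightly more careful renderings of steps the paper passes over quickly, but the argument is the same.
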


\begin{proof}

We have
\begin{equation}\label{eq:taylor}
0=V_{N \rho} \left(\hat{\rho}_N\right)=V_{N \rho} \left(\rho^*\right)+ \overline{V}_{N\rho \rho} \left(\hat{\rho}-\rho^*\right),
\end{equation}
where $$ \overline{V}_{N\rho \rho}= \int_{0}^{1} V_{N\rho \rho}\left(\left(1-\lambda\right)\rho^*+\lambda \hat{\rho}_N\right) d\lambda .$$
Since $$\varepsilon_n(\theta^*)=\Delta Z_k +O_M^Q(\alpha^n) $$ for some $\left|\alpha\right|<1$, and with $u_n=\frac{\partial}{\partial \rho}\left|h_n(\rho)K^{-1/2}\right|^2$ using the inequality in Theorem \ref{thm:ineq} with $f_n=1$ and $q \leq Q$ from
\begin{equation}
\E^{1/q} \left|\sum_{n=1}^{N} u_n\right|^q \leq C_q N^{1/2} M^{1/2}_{q}(u) \Gamma^{1/2}_{q}(u)
\end{equation}
we conclude $V_{N \rho}(\rho^*)=O_M^{Q/2}(N^{1/2}).$
Let
$$
\overline{W}_{N\rho \rho}=\int_{0}^{1} W_{\rho \rho}\left(\left(1-\lambda\right)\rho^*+\lambda \hat{\rho}_N\right) d\lambda.
$$
%
%
$W$ is a smooth function, hence
\begin{equation}\label{eq:smooth}
\left\|W_{\rho \rho}\left(\rho^*+\lambda\left(\hat{\rho}_N-\rho^*\right)\right)-W_{\rho \rho}(\rho^*)\right\|<c\left|\hat{\rho}_N-\rho^*\right|<cd.
\end{equation}
Clearly $W_{\rho\rho}(\rho^*)$ is positive definite, hence $\overline{W}_{N\rho\rho}>cI,$ with some $c>0.$ Since on $\Omega_N$

$$\left\|\frac{1}{N}\overline{V}_{N\rho\rho}-\overline{W}_{N\rho\rho}\right\|<d'$$
holds, choosing $d'$ sufficiently small yields

\begin{equation}
\label{eq:lambdamin}
\lambda_{\min}\left(\frac{1}{N}\overline{V}_{N\rho\rho}\right)>c
\end{equation}
on $\Omega_N,$ where $\lambda_{\min}(M)$ denotes the smallest eigenvalue of $M.$ Thus $\left\|\overline{V}^{-1}_{N\rho\rho}\right\|<cN^{-1}$ on $\Omega_N.$ Then using ($\ref{eq:taylor}$) we get that

$$\chi_{\Omega_N}\left(\hat{\rho}_N-\rho^*\right)=O^{Q/2}_M(N^{-1/2}).$$
Furthermore, since $P(\Omega^C_N)=O(N^{-s})$ for any $0<s \leq Q/2$, the lemma follows.
\qed

\end{proof}
Now we are ready to prove Theorem \ref{th:difference}.

\begin{proof}
Using the previous lemma one can improve ($\ref{eq:smooth}$):
$$
\left\|W_{\rho \rho}\left(\rho^*+\lambda\left(\hat{\rho}_N-\rho^*\right)\right)-W_{\rho \rho}(\rho^*)\right\|<c\left|\hat{\rho}_N-\rho^*\right|=O^{Q/2}_M(N^{-1/2}),
$$
and after integration with respect to $\lambda$ we get
\begin{equation}\label{eq:dif1}
\left\|\overline{W}_{N\rho\rho}-W_{\rho\rho}(\rho^*)\right\|=O^{Q/2}_M(N^{-1/2}).
\end{equation}
Since $\delta V_{N\rho\rho}=O^{Q/(2(p+q))}_M(N^{-1/2})$, it implies
\begin{equation}\label{eq:dif2}
\left\|\frac{1}{N}\overline{V}_{N \rho\rho}-\overline{W}_{\rho\rho}\right\|=O^{Q/(2(p+q))}_M(N^{-1/2}).
\end{equation}
Hence by triangle inequality from ($\ref{eq:dif1}$) and ($\ref{eq:dif2}$)
\begin{equation}
\label{eq:dif3}
\left\|\frac{1}{N}\overline{V}_{N\rho\rho}-W_{\rho\rho}(\rho^*)\right\|=O^{Q/(2(p+q))}_M(N^{-1/2})
\end{equation}
follows. From (\ref{eq:lambdamin}) and (\ref{eq:dif3}) we get
\begin{equation}
\chi_{\Omega_N}\left\|\overline{V}^{-1}_{N\rho\rho}-\frac{1}{N}W^{-1}_{\rho\rho}(\rho^*)\right\|=O^{Q/(2(p+q))}_M(N^{-3/2}).
\end{equation}

Finally,
\begin{align*}
    \chi_{\Omega_N}\left(\hat{\rho}_N-\rho^*\right)=-\chi_{\Omega_N}\overline{V}^{-1}_{N\rho\rho}V_{N\rho}(\rho^*)= \\
    -\chi_{\Omega_N}\left(\frac{1}{N}W^{-1}_{\rho\rho}(\rho^*)+O^{Q/(2(p+q))}_M(N^{-3/2})\right) V_{N\rho}(\rho^*)= \\
    -\chi_{\Omega_N}\frac{1}{N}W^{-1}_{\rho\rho}(\rho^*) V_{N\rho}(\rho^*)+O^{Q/(2(p+q))}_M(N^{-1})
\end{align*}
Since $\chi_{\Omega_N}=1-O^Q_M(N^{-s})$ for any $0<s \leq Q/(4p+4q),$ from the last expression reads as
\begin{equation*}
-\left(R^*\right)^{-1}\frac{1}{N}V_{N\rho}(\rho^*)+O^{Q/(2(p+q))}_M(N^{-1})
\end{equation*}
\qed
\end{proof}
The following theorem provides an explicit expression for the Hessian of $W:$
\begin{theorem}\label{lemma:R*} Under Conditions 1,2,3 we have
$$R^*=
\begin{pmatrix}
W_{\theta \theta}(\theta^*) & 0 \\
0 & W_{\eta \eta}(\eta^*)
\end{pmatrix},
$$
i.e. $R^*$ is block diagonal, and here
\begin{equation*}
W_{\theta \theta}(\theta^*)=w ~\E\left[\varepsilon^{*}_{n \theta}(\theta^*)\varepsilon^{T*}_{n \theta}(\theta^*)\right],
\end{equation*}
with
\begin{eqnarray*}
w=\sum_{l,m=1}^k K^{-1}_{l,m}
\big((u^2_l+u^2_m)\varphi(u_l,\eta^*)\varphi(-u_m,\eta^*)-
(u_l-u_m)^2\varphi(u_l-u_m,\eta^*)\big),
\end{eqnarray*}
and
\begin{equation*}
\begin{split}
(W_{\eta \eta})_{j,j'}(\eta^*)=\sum_{l,m=1}^k K^{-1}_{l,m} \big(\varphi_{\eta_j}(u_l,\eta^*)\varphi_{\eta_j'}(-u_m,\eta^*)+
\varphi_{\eta_j'}(u_l,\eta^*)\varphi_{\eta_j}(-u_m,\eta^*)\big).
\end{split}
\end{equation*}
\end{theorem}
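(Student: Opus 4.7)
The plan is to differentiate the explicit quadratic form
\begin{equation*}
W(\rho) = \sum_{l,m=1}^{k} K^{-1}_{l,m}\, \mathbb{E}\!\left[h^*_n(u_l;\rho)\,\bar{h}^*_n(u_m;\rho)\right]
\end{equation*}
twice at $\rho^{*}=(\theta^{*},\eta^{*})$ and evaluate the three blocks $W_{\theta\theta}$, $W_{\eta\eta}$, $W_{\theta\eta}$ separately. Recall that $h^{*}_{n\theta}(u;\theta,\eta) = e^{iu\varepsilon^{*}_{n}(\theta)}\,iu\,\varepsilon^{*}_{n\theta}(\theta)$ depends only on $\theta$, while $h^{*}_{n\eta}(u;\theta,\eta) = -\varphi_{\eta}(u,\eta)$ is deterministic and independent of $\theta$; consequently $h^{*}_{n\theta\eta}\equiv 0$.

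First I would establish the block-diagonal structure by showing $W_{\theta\eta}(\rho^{*})=0$. After applying $\partial_{\theta}\partial_{\eta}$ to each summand, only two terms remain, namely a constant $-\varphi_{\eta}(-u_{m},\eta^{*})$ times $\mathbb{E}[h^{*}_{n\theta}(u_{l};\rho^{*})]$ and its $l\leftrightarrow m$ mirror. Exactly as in the proof of Lemma~\ref{lemma:exp}, conditioning on $\mathscr{F}^{\Delta Z}_{n-1}$ and using that $\varepsilon^{*}_{n\theta}(\theta^{*})$ is $\mathscr{F}^{\Delta Z}_{n-1}$-measurable gives $\mathbb{E}[h^{*}_{n\theta}(u_{l};\rho^{*})] = iu_{l}\,\varphi(u_{l},\eta^{*})\,\mathbb{E}[\varepsilon^{*}_{n\theta}(\theta^{*})]=0$, since $\varepsilon^{*}_{n\theta}(\theta^{*})$ is the stationary output of a stable linear filter driven by the zero-mean process $\Delta Z_{n}$.

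Next I turn to the $\theta\theta$-block. Expanding the second derivative of a single summand produces four terms
\begin{equation*}
T_{1}+T_{2}+T_{3}+T_{4} = \mathbb{E}[h^{*}_{n\theta_{i}\theta_{j}}\bar{h}^{*}_{n}] + \mathbb{E}[h^{*}_{n\theta_{i}}\bar{h}^{*}_{n\theta_{j}}] + \mathbb{E}[h^{*}_{n\theta_{j}}\bar{h}^{*}_{n\theta_{i}}] + \mathbb{E}[h^{*}_{n}\bar{h}^{*}_{n\theta_{i}\theta_{j}}],
\end{equation*}
all evaluated at $\rho^{*}$ with $u_{l}$ in the unbarred and $u_{m}$ in the barred factor. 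For each term I would condition on $\mathscr{F}^{\Delta Z}_{n-1}$: the $\Delta Z_{n}$-dependence collapses to the factors $\varphi(u_{l}-u_{m},\eta^{*})$ and $\varphi(u_{l},\eta^{*})\varphi(-u_{m},\eta^{*})$, while the random coefficients reduce either to $\varepsilon^{*}_{n\theta_{i}}(\theta^{*})\varepsilon^{*}_{n\theta_{j}}(\theta^{*})$ or to $\varepsilon^{*}_{n\theta_{i}\theta_{j}}(\theta^{*})$; the latter drops out because $\mathbb{E}[\varepsilon^{*}_{n\theta_{i}\theta_{j}}(\theta^{*})]=0$ by the same stable-filter argument. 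One then finds
\begin{equation*}
T_{1}+T_{4} = -(u_{l}^{2}+u_{m}^{2})\bigl(\varphi(u_{l}-u_{m},\eta^{*})-\varphi(u_{l},\eta^{*})\varphi(-u_{m},\eta^{*})\bigr)\,\mathbb{E}[\varepsilon^{*}_{n\theta_{i}}\varepsilon^{*}_{n\theta_{j}}],
\end{equation*}
together with $T_{2}+T_{3} = 2u_{l}u_{m}\,\varphi(u_{l}-u_{m},\eta^{*})\,\mathbb{E}[\varepsilon^{*}_{n\theta_{i}}\varepsilon^{*}_{n\theta_{j}}]$. The main point is the algebraic identity $-(u_{l}^{2}+u_{m}^{2})+2u_{l}u_{m}=-(u_{l}-u_{m})^{2}$, after which summation over $l,m$ weighted by $K^{-1}_{l,m}$ yields exactly the scalar $w$ multiplying $\mathbb{E}[\varepsilon^{*}_{n\theta}(\theta^{*})\varepsilon^{*T}_{n\theta}(\theta^{*})]$.

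The $\eta\eta$-block is the easiest: $h^{*}_{n\eta}=-\varphi_{\eta}$ and $h^{*}_{n\eta\eta}=-\varphi_{\eta\eta}$ are deterministic, so the analogues of $T_{1}$ and $T_{4}$ contain the factor $\mathbb{E}[\bar h^{*}_{n}(\rho^{*})]=0$ or $\mathbb{E}[h^{*}_{n}(\rho^{*})]=0$ and vanish. The two surviving cross-products give
\begin{equation*}
(W_{\eta\eta})_{j,j'}(\eta^{*}) = \sum_{l,m=1}^{k} K^{-1}_{l,m}\bigl(\varphi_{\eta_{j}}(u_{l},\eta^{*})\overline{\varphi_{\eta_{j'}}(u_{m},\eta^{*})} + \varphi_{\eta_{j'}}(u_{l},\eta^{*})\overline{\varphi_{\eta_{j}}(u_{m},\eta^{*})}\bigr),
\end{equation*}
and $\overline{\varphi_{\eta_{j}}(u,\eta^{*})}=\varphi_{\eta_{j}}(-u,\eta^{*})$ gives the stated formula. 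The only genuinely delicate step in the whole argument is the sign bookkeeping in the $\theta\theta$-block; everything else is a repeated application of the tower-law trick already used in Lemma~\ref{lemma:exp}.
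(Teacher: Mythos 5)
Your proposal is correct and follows essentially the same route as the paper's proof: differentiate the quadratic form term by term, condition on $\mathscr{F}^{\Delta Z}_{n-1}$ so that the $\Delta Z_n$-dependence collapses to $\varphi(u_l-u_m,\eta^*)$ and $\varphi(u_l,\eta^*)\varphi(-u_m,\eta^*)$, kill the terms containing $\varepsilon^*_{n\theta_j}$, $\varepsilon^*_{n\theta_j\theta_{j'}}$ or $\E[h^*_n]$ by the zero-mean stable-filter argument, and finish with the identity $-(u_l^2+u_m^2)+2u_lu_m=-(u_l-u_m)^2$. The treatment of the cross block and the $\eta\eta$ block likewise matches the paper's.
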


\begin{proof}

First let $j,j' \leq \text{dim } \theta=p,$ then an entry $\left(R^*\right)_{j,j'}$ of $R^*$ is
\begin{equation*}
\begin{split}
&\E \left[\frac{\partial^2}{\partial \theta_{j} \partial \theta_{j'}}\sum_{l,m=1}^k K^{-1}_{l,m}
\left(e^{i u_l \varepsilon^*_n(\theta)}-\varphi(u_l,\eta)\right)\left(e^{-i u_m \varepsilon^*_n(\theta)}-\varphi(-u_m,\eta)\right)\right|_{\substack{\theta=\theta^* \\ \eta=\eta^*}} \Bigg]
\end{split}
\end{equation*}
Carrying out differentiation yields
\begin{eqnarray*}
&\E \Bigg[ \sum_{l,m=1}^k K^{-1}_{l,m} \left(e^{i u_l \varepsilon^{*}_n(\theta^*)}(iu_l)^2 \varepsilon^{*}_{n \theta_{j}}(\theta^*)~\varepsilon^{*}_{n \theta_{j'}}(\theta^*)+e^{i u_l \varepsilon^{*}_n(\theta^*)}(iu_l)~ \varepsilon^{*}_{n \theta_{j} \theta_{j'}}(\theta^*)\right) \\ &\times \left(e^{-i u_m \varepsilon^*_n(\theta^*)}-\varphi(-u_m,\eta^*)\right)\Bigg]+
\\&+ \E \Bigg[ \sum_{l,m=1}^k K^{-1}_{l,m} \left(e^{i u_l \varepsilon^{*}_n(\theta^*)}(iu_l)~ \varepsilon^{*}_{n \theta_{j}}(\theta^*)\right)\left(e^{-i u_m \varepsilon^{*}_n(\theta^*)}(-iu_m)~ \varepsilon^{*}_{n \theta_{j'}}(\theta^*)\right)+
\\ &+\sum_{l,m=1}^k K^{-1}_{l,m} \left(e^{i u_l \varepsilon^{*}_n(\theta^*)}(iu_l)~ \varepsilon^{*}_{n \theta_{j'}}(\theta^*)\right)\left(e^{-i u_m \varepsilon^{*}_n(\theta^*)}(-iu_m)~ \varepsilon^{*}_{n \theta_{j}}(\theta^*)\right)+ \Bigg]
\\ &+ \E \Bigg[\sum_{l,m=1}^k K^{-1}_{l,m} \left(e^{i u_l \varepsilon^*_n(\theta^*)}-\varphi(u_l,\eta^*)\right) \times \\
&\left(e^{-i u_m \varepsilon^{*}_n(\theta^*)}(-iu_m)^2 \varepsilon^{*}_{n \theta_{j}}(\theta^*)~\varepsilon^{*}_{n \theta_{j'}}(\theta^*)+e^{-i u_m \varepsilon^{*}_n(\theta^*)}(-iu_m)~ \varepsilon^{*}_{n \theta_{j} \theta_{j'}}(\theta^*)\right) \Bigg]
\end{eqnarray*}
Now we use, like in the proof of Lemma \ref{lemma:exp}, the tower rule and that $\varepsilon^*_{n \theta}(\theta^*)$ is $\mathscr{F}_{n-1}^{\Delta Z}$ measurable and that $\E \left[\varepsilon^{*}_{n \theta_{j}}(\theta^*)\right]=\E \left[\varepsilon^{*}_{n \theta_{j'}}(\theta^*)\right]=\E \left[\varepsilon^{*}_{n \theta_{j} \theta_{j'}}(\theta^*)\right]=0.$ The previous formula reads as
\begin{eqnarray*}
&\E\left[\varepsilon^{*}_{n \theta_{j}}(\theta^*)\varepsilon^{*}_{n \theta_{j'}}(\theta^*)\right] \sum_{l,m=1}^k K^{-1}_{l,m} \left(\varphi(u_l-u_m,\eta^*)-\varphi(u_l,\eta^*)\varphi(-u_m,\eta^*)\right)(-u_l^2)+ \\
&2 \E\left[\varepsilon^{*}_{n \theta_{j}}(\theta^*)\varepsilon^{*}_{n \theta_{j'}}(\theta^*)\right] \sum_{l,m=1}^k K^{-1}_{l,m} \varphi(u_l-u_m,\eta^*)(u_l u_m)  + \\
&\E\left[\varepsilon^{*}_{n \theta_{j}}(\theta^*)\varepsilon^{*}_{n \theta_{j'}}(\theta^*)\right] \sum_{l,m=1}^k K^{-1}_{l,m} \left(\varphi(u_l-u_m,\eta^*)-\varphi(u_l,\eta^*)\varphi(-u_m,\eta^*)\right)(-u_m^2)= \\
&\E\left[\varepsilon^{*}_{n \theta_{j}}(\theta^*)\varepsilon^{*}_{n \theta_{j'}}(\theta^*)\right] \times \\ &\sum_{l,m=1}^k K^{-1}_{l,m}
\left((u^2_l+u^2_m)\varphi(u_l,\eta^*)\varphi(-u_m,\eta^*)-(u_l-u_m)^2\varphi(u_l-u_m,\eta^*)\right)
\end{eqnarray*}
To double check the result note that the last formula gives real matrix since conjugation doest not modify the value of the double sum.

If $j \leq p  < j' \leq p+q,$ then $\left(R^*\right)_{j,j'}$ equals to
\begin{eqnarray*}
&\E \left[\frac{\partial^2}{\partial \theta_{j} \partial \eta_{j'}}\sum_{l,m=1}^k K^{-1}_{l,m} \left(e^{i u_l \varepsilon^*_n(\theta)}-\varphi(u_l,\eta)\right)\left(e^{-i u_m \varepsilon^*_n(\theta)}-\varphi(-u_m,\eta)\right)\right|_{\substack{\theta=\theta^* \\ \eta=\eta^*}} \Bigg]=0,
\end{eqnarray*}
because the differentiation with respect to $\eta_{j'}$ yields a non-random constant  of the form $\varphi_{\eta_j}(u,\eta^*)$ and the differentiation with respect to $\theta_j$ yields the term $\E\left[e^{i u \varepsilon^{*}_n(\theta^*)}iu\varepsilon^{*}_{n \theta_{j}}(\theta^*)\right]=0.$

Finally, if $p < j,j'$ then $\left(R^*\right)_{j,j'}$ equals to
\begin{eqnarray*}
&\E \left[\frac{\partial^2}{\partial \eta_{j} \partial \eta_{j'}}\sum_{l,m=1}^k K^{-1}_{l,m} \left(e^{i u_l \varepsilon^*_n(\theta)}-\varphi(u_l,\eta)\right)\left(e^{-i u_m \varepsilon^*_n(\theta)}-\varphi(-u_m,\eta)\right)\right|_{\substack{\theta=\theta^* \\ \eta=\eta^*}} \Bigg]=\\
&\sum_{l,m=1}^k K^{-1}_{l,m} \left(\varphi_{\eta_j}(u_l,\eta^*)\varphi_{\eta_j'}(-u_m,\eta^*)+\varphi_{\eta_j'}(u_l,\eta^*)\varphi_{\eta_j}(-u_m,\eta^*)\right).
\end{eqnarray*}
To sum it up, $$R^*=
\begin{pmatrix}
W_{\theta \theta}(\theta^*) & 0 \\
0 & W_{\eta \eta}(\eta^*)
\end{pmatrix}
$$ is block diagonal matrix, where
\begin{eqnarray*}
&W_{\theta \theta}(\theta^*)=\E\left[\varepsilon^{*}_{n \theta}(\theta^*)\varepsilon^{T*}_{n \theta}(\theta^*)\right] \times \\
&\sum_{l,m=1}^k K^{-1}_{l,m}
\left((u^2_l+u^2_m)\varphi(u_l,\eta^*)\varphi(-u_m,\eta^*)-(u_l-u_m)^2\varphi(u_l-u_m,\eta^*)\right),
\end{eqnarray*}
and $$ (W_{\eta \eta})_{j,j'}(\eta^*)= \sum_{l,m=1}^k K^{-1}_{l,m} \left(\varphi_{\eta_j}(u_l,\eta^*)\varphi_{\eta_{j'}}(-u_m,\eta^*)+\varphi_{\eta_{j'}}(u_l,\eta^*)\varphi_{\eta_j}(-u_m,\eta^*)\right).$$
\qed

\end{proof}

\textit{Remark 1}: Note that the expression for $(W_{\eta
\eta})_{j,j'}(\eta^*)$ is identical to what we would obtained for i.i.d. samples following \cite{CARRASCO-CGMM}.

\textsl{Remark 2}: Since we have $w \ge 0$, the expression for $w$
yields a non-trivial inequality for characteristic functions.

The next step in calculating the asymptotic covariance matrix of ${\hat \theta_N}$ is the computation of
$S^*={\rm Cov}\left(V^*_{N\theta}(\rho^*),V^*_{N\theta}(\rho^*)\right)$. For this we need to introduce the following auxiliary function:
\begin{align*}
\label{eq:DEF_F}
&F(a,b,c,d,\eta)=\\&ab\big[\varphi(a+b+c+d,\eta)-
\varphi(a+b+c,\eta)\varphi(d,\eta)- \\
&\varphi(a+b+d,\eta)\varphi(c,\eta)+
\varphi(a+b,\eta)\varphi(c,\eta)\varphi(d,\eta)\big].
\end{align*}
\begin{theorem}\label{lemma:cov_theta} Under Conditions 1,2,3 we have
\begin{equation*}
S^*= {\rm Cov} \left(V^*_{N\theta}(\rho^*),V^*_{N\theta}(\rho^*)\right)=\begin{pmatrix}
{\rm Cov} \left(V^*_{N\theta}(\rho^*),V^*_{N\theta}(\rho^*)\right) & 0 \\
0 & {\rm Cov} \left(V^*_{N\eta}(\rho^*),V^*_{N\eta}(\rho^*)\right) \end{pmatrix},
\end{equation*}
where
${\rm Cov} \left(V^*_{N\theta}(\rho^*),V^*_{N\theta}(\rho^*)\right)=s~\E\left[\varepsilon^{*}_{n \theta}(\theta^*)\varepsilon^{T*}_{n \theta}(\theta^*)\right],$
with
\begin{align*}\nonumber
&s=\sum_{l,m,s,t=1}^N K^{-1}_{l,m}K^{-1}_{s,t} \times \\
&\big[F(u_l,u_s,-u_m,-u_t,\eta^*)+
F(u_l,-u_t,-u_m,u_s,\eta^*)+ \\
&F(-u_m,u_s,u_l,-u_t,\eta^*)+
F(-u_m,-u_t,u_l,u_s,\eta^*)\big],
\end{align*}
and
\begin{equation*}
\begin{split}
&\left({\rm Cov} \left(V^*_{N\eta}(\rho^*),V^*_{N\eta}(\rho^*)\right)\right)_{j,j'}=\sum_{l,m,s,t=1}^{N} K^{-1}_{l,m} K^{-1}_{s,t} \times \\ &\Big(\varphi_{\eta_{j}}(u_l,\eta^*)\varphi_{\eta_{j'}}(u_s,\eta^*)\varphi(-u_m-u_t,\eta^*)+
\varphi_{\eta_{j}}(u_l,\eta^*)\varphi_{\eta_{j'}}(-u_t,\eta^*)\varphi(-u_m+u_s,\eta^*)+ \\
&\varphi_{\eta_{j}}(-u_m,\eta^*)\varphi_{\eta_{j'}}(u_s,\eta^*)\varphi(u_l-u_t,\eta^*)+
\varphi_{\eta_{j}}(-u_m,\eta^*)\varphi_{\eta_{j'}}(-u_t,\eta^*)\varphi(u_l+u_s,\eta^*)\Big)
\end{split}
\end{equation*}
\end{theorem}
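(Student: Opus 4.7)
The plan is to mirror the structure of the proof of Theorem \ref{lemma:R*}. Write $V^*_{N\rho}(\rho^*) = \sum_{n=1}^N T_n$ and split $T_n = (T_n^\theta, T_n^\eta)$ according to $\rho = (\theta,\eta)$. The computation relies on the tower rule with $\mathscr{F}^{\Delta Z}_{n-1}$ and on the three facts used throughout Section \ref{sec:ECF_proofs}: $\varepsilon^*_n(\theta^*) = \Delta Z_n$ are i.i.d., $\varepsilon^*_{n,\theta}(\theta^*)$ is $\mathscr{F}^{\Delta Z}_{n-1}$-measurable, and $\E[\varepsilon^*_{n,\theta}(\theta^*)] = 0$.

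For the off-diagonal block I would show $\E[T_n^\theta (T_m^\eta)^T] = 0$ for all $n,m$ by the argument of Lemma \ref{lemma:exp}: since $h^*_{n,\eta}(u) = -\varphi_\eta(u,\eta^*)$ is deterministic, each summand of $T_m^\eta$ contains an isolated centered factor $e^{\pm iu\varepsilon^*_m(\theta^*)}-\varphi(\pm u,\eta^*)$, and conditioning on $\mathscr{F}^{\Delta Z}_{\max(n,m)-1}$ forces its conditional mean to vanish. For the $\eta$-block, the $T_n^\eta$ are i.i.d.\ with zero mean, so ${\rm Cov}(V^*_{N\eta}) = N\cdot \E[T_1^\eta (T_1^\eta)^T]$; expanding the four cross-products arising from $T_n^\eta = h^{*T}_{n,\eta}K^{-1}\bar h^*_n + h^{*T}_n K^{-1}\bar h^*_{n,\eta}$, and using $\E[(e^{iu\Delta Z}-\varphi(u,\eta^*))(e^{iv\Delta Z}-\varphi(v,\eta^*))] = \varphi(u+v,\eta^*)-\varphi(u,\eta^*)\varphi(v,\eta^*)$, reproduces the four-term $\varphi_{\eta_j}\varphi_{\eta_{j'}}\varphi$ sum in the statement.

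The $\theta$-block is the main obstacle. I would factor $T_n^\theta = \varepsilon^*_{n,\theta}(\theta^*)\cdot G(\Delta Z_n,\eta^*)$ for a scalar-valued $G$ obtained by pulling $\varepsilon^*_{n,\theta}$ out of both summands of $T_n^\theta = h^{*T}_{n,\theta}K^{-1}\bar h^*_n + h^{*T}_n K^{-1}\bar h^*_{n,\theta}$. Conditioning $T_n^\theta (T_n^\theta)^T$ on $\mathscr{F}^{\Delta Z}_{n-1}$ then gives $\E[\varepsilon^*_{n,\theta}(\varepsilon^*_{n,\theta})^T]\cdot \E[G(\Delta Z_n)^2]$. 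The quantity $\E[G^2]$ is a quadruple sum over $(l,m,s,t)$ with four summands per $(l,m,s,t)$ coming from the four cross-products of the two parts of $G$ with the two parts of its copy; each such cross-product reduces to an expectation of the form $\E[e^{i(a+b)\Delta Z}(e^{ic\Delta Z}-\varphi(c,\eta^*))(e^{id\Delta Z}-\varphi(d,\eta^*))]$, which by direct evaluation equals $F(a,b,c,d,\eta^*)/(ab)$ for the specific assignment $(a,b,c,d)$ matching one of the four $F$-terms listed in $s$. Folding the $iu$ prefactors into $F$ and summing over $(l,m,s,t)$ gives the stated $\theta$-block. The cross-time contributions $n_1\ne n_2$ are handled by conditioning on $\mathscr{F}^{\Delta Z}_{\max-1}$ and using $\E[\varepsilon^*_{n,\theta}] = 0$ together with the $L$-mixing decay estimates invoked in the proof of Theorem \ref{th:difference}.

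The main obstacle is the combinatorial bookkeeping in the $\theta$-block: correctly assembling the four cross-products of $G$ with itself into exactly the four $F$-terms, and tracking the $iu$ prefactors together with the conjugation identity $\overline{\varphi(u,\eta^*)} = \varphi(-u,\eta^*)$ and the symmetry of $K^{-1}$ so that the final quadruple sum matches the expression for $s$.
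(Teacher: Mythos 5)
Your overall strategy is the one the paper intends (the paper itself gives only a two-sentence sketch, deferring to the computations in Lemma \ref{lemma:exp} and Theorem \ref{lemma:R*}), and your treatment of the equal-time terms is right: at $\rho^*$ one has $\varepsilon^*_n(\theta^*)=\Delta Z_n$, the factorization $T^\theta_n=\varepsilon^*_{n\theta}(\theta^*)\,G(\Delta Z_n)$ with scalar $G$ holds, independence of $\varepsilon^*_{n\theta}(\theta^*)$ from $\Delta Z_n$ gives $\E\left[T^\theta_n(T^\theta_n)^T\right]=\E\left[\varepsilon^*_{n\theta}\varepsilon^{*T}_{n\theta}\right]\E\left[G^2\right]$, and $\E\left[G^2\right]$ does expand into the four $F$-terms via $\E\left[e^{i(a+b)\Delta Z}(e^{ic\Delta Z}-\varphi(c))(e^{id\Delta Z}-\varphi(d))\right]=F(a,b,c,d,\eta^*)/(ab)$.

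The genuine gap is in the unequal-time terms, and it affects both the $\theta$-block and the off-diagonal block. For $n_2>n_1$, conditioning on $\mathscr{F}^{\Delta Z}_{n_2-1}$ leaves $\E\left[T^\theta_{n_2}\mid\mathscr{F}^{\Delta Z}_{n_2-1}\right]=g_0\,\varepsilon^*_{n_2\theta}(\theta^*)$ with $g_0=\E\left[G(\Delta Z)\right]=\sum_{l,m}K^{-1}_{l,m}\,i(u_l-u_m)\bigl(\varphi(u_l-u_m,\eta^*)-\varphi(u_l,\eta^*)\varphi(-u_m,\eta^*)\bigr)$, which is \emph{not} zero for $k>1$ and a nonsymmetric, merely zero-mean noise law; the score is therefore not a martingale difference, and the cross term reduces to $g_0^2\,\E\left[\varepsilon^*_{n_1\theta}\varepsilon^{*T}_{n_2\theta}\right]$. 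Neither of your two tools finishes this: $\E\left[\varepsilon^*_{n\theta}\right]=0$ does not kill a correlation, and $L$-mixing only gives summable decay, i.e.\ an $O(N)$ contribution of the same order as the diagonal, whereas the theorem asserts an exact identity. What the paper actually invokes here is the further identity $\E\left[\varepsilon^*_{n\theta_i}(\theta^*)\varepsilon^*_{m\theta_i}(\theta^*)\right]=0$ for $m>n$ (stated immediately after the theorem); your proof needs either that identity or $g_0=0$ (which does hold for $k=1$, or for real-valued $\varphi$), and you must state and justify whichever you rely on. The same issue appears in your off-diagonal argument when the $\eta$-indexed time precedes the $\theta$-indexed time: the ``isolated centered factor'' is then $\mathscr{F}^{\Delta Z}_{\max-1}$-measurable and is not annihilated by conditioning. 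A smaller point: your (correct) identity $\E\left[(e^{iu\Delta Z}-\varphi(u))(e^{iv\Delta Z}-\varphi(v))\right]=\varphi(u+v)-\varphi(u)\varphi(v)$ produces centered terms of the form $\varphi(\cdot+\cdot)-\varphi(\cdot)\varphi(\cdot)$ in the $\eta$-block rather than the bare $\varphi(\cdot+\cdot)$ written in the statement, so it does not literally ``reproduce'' the displayed sum; you should flag the discrepancy rather than claim agreement.
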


The proof of the last theorem is a simple calculation like the previous one and the proof uses that $\E\left[\varepsilon^*_{n \theta_i}(\theta^*)\varepsilon^*_{m \theta_i}(\theta^*)\right]=\E\left[\E\left[\varepsilon^*_{n \theta_i}(\theta^*)\varepsilon^*_{m \theta_i}(\theta^*)| \mathscr{F}_{n-1}^{\Delta Z}\right]\right]=\E\left[\varepsilon^*_{n \theta_i}(\theta^*)\left[\E\left[\varepsilon^*_{m \theta_i}(\theta^*)| \mathscr{F}_{n-1}^{\Delta Z}\right]\right]\right]=0$ for $m>n.$


The proof follows the line of arguments for Lemma \ref{lemma:exp}. We note that calculations are considerably simplified if we take $K=I$. Note that both $R^*$ and $S^*$ are of the form $c~\Sigma_{P},$ where $\Sigma_{P}$ is the asymptotic covariance matrix for the prediction error method, see below ($\ref{eq:sigmap1}$), and $c$ is a constant. The last two theorems and Theorem \ref{th:difference} together gives an exact formula for the asymptotic covariance matrix of the estimator.
\begin{theorem}
\label{th:AS_COV}
Under Conditions 1,2 and 3 the asymptotic covariance matrix of the ECF estimator for $\theta^*$ can be written as
%
%
\begin{equation}
\Sigma_{E}=(R^*)^{-1}S^*(R^*)^{-1}=\frac{s}{w^2}~\Sigma_{P},
\end{equation}
where the $s$ and $w$ are given in
%
%
Theorems \ref{lemma:cov_theta} and \ref{lemma:R*}.
\end{theorem}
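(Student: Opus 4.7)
The plan is to combine the expansion from Theorem \ref{th:difference} with the explicit expressions for $R^*$ and $S^*$ given by Theorems \ref{lemma:R*} and \ref{lemma:cov_theta}, and then to read off the formula by pure linear algebra on block-diagonal matrices.

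Starting from
$$\hat\rho_N - \rho^* = -(R^*)^{-1}\tfrac{1}{N}V_{N\rho}(\rho^*) + O^{Q/(2(p+q))}_M(N^{-1}),$$
the leading term is of order $N^{-1/2}$ because $V_{N\rho}(\rho^*)=O^{Q/2}_M(N^{1/2})$, as shown in the proof of Lemma \ref{lemma:N-1/2}. Scaling by $N$ and forming ${\rm Cov}(\hat\rho_N - \rho^*)$, the cross-term between the leading and the remainder is of size $N^{-3/2}$ and the remainder squared is $N^{-2}$, so
$$N\cdot {\rm Cov}(\hat\rho_N - \rho^*) \longrightarrow (R^*)^{-1}\,S^*\,(R^*)^{-1},$$
where $S^* = \lim_N \tfrac{1}{N}{\rm Cov}(V_{N\rho}(\rho^*))$ coincides with the matrix computed in Theorem \ref{lemma:cov_theta}; the transient difference between $h_n$ and $h_n^*$, of order $O^Q_M(\alpha^n)$ per term, only contributes a lower-order correction and may be absorbed into the remainder. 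This establishes the first equality.

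For the second equality I would exploit the block-diagonal structure in Theorems \ref{lemma:R*} and \ref{lemma:cov_theta}. Writing $E := \E\bigl[\varepsilon^*_{n\theta}(\theta^*)\,\varepsilon^{*T}_{n\theta}(\theta^*)\bigr]$, the $\theta\theta$-blocks are $R^*_{\theta\theta}=w\,E$ and $S^*_{\theta\theta}=s\,E$, sharing a common matrix factor $E$. Hence
$$(R^*_{\theta\theta})^{-1}\,S^*_{\theta\theta}\,(R^*_{\theta\theta})^{-1} \;=\; (wE)^{-1}(sE)(wE)^{-1} \;=\; \frac{s}{w^2}\,E^{-1},$$
and identifying $E^{-1}$ with $\Sigma_P$ as in (\ref{eq:sigmap1}) yields $\Sigma_E = (s/w^2)\,\Sigma_P$.

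The main obstacle is the second step: controlling the residual in a covariance calculation requires moment bounds with $Q$ large enough that the $O^{Q/(2(p+q))}_M(N^{-1})$ remainder in Theorem \ref{th:difference} contributes negligibly after being scaled by $N$, and one must also verify that $\tfrac{1}{N}{\rm Cov}(V_{N\rho}(\rho^*))$ does converge to the stated limit. The latter follows from the $L$-mixing structure of the summands $u_n(\rho^*)$ established in the preceding analysis, together with the stationary-versus-transient comparison between $V_{N\rho}$ and $V^*_{N\rho}$ already used in the proof of Lemma \ref{lemma:exp}. Once these two technical points are handled, the remainder of the argument is routine algebra on the block-diagonal representation.
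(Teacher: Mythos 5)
Your proposal is correct and follows essentially the same route as the paper, which establishes Theorem \ref{th:AS_COV} simply by combining the error expansion of Theorem \ref{th:difference} with the block-diagonal formulas for $R^*$ and $S^*$ from Theorems \ref{lemma:R*} and \ref{lemma:cov_theta}, observing that both $\theta\theta$-blocks are scalar multiples ($w$ and $s$ respectively) of $\E\left[\varepsilon^{*}_{n \theta}(\theta^*)\varepsilon^{T*}_{n \theta}(\theta^*)\right]=\Sigma_{P}^{-1}$. Your additional care about the $O^{Q/(2(p+q))}_M(N^{-1})$ remainder and the stationary-versus-transient comparison only makes explicit what the paper leaves implicit in its one-line justification.
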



\section{Combining PE and ECF estimators}
\label{sec:mixed}
In this section we estimate the dynamics in a natural way and then we estimate the noise parameters using the ECF method. We identify $\theta^*$
using only the orthogonality of $\Delta Z$ by applying a
prediction error method. This way we get an estimation
$\hat{\theta}_{N}$  of $\theta^*$, without using the characteristic function of $\Delta Z.$ Then we apply an ECF method with the score
function
$$h_n(u,\eta)=e^{iu\varepsilon_n(\hat{\theta}_{N})}-\varphi(u,\eta)$$
to estimate $\eta^*.$

First, we define the estimated innovation process as in the
previous sections. The prediction error method is obtained by minimizing the cost
function
\begin{equation*}
V_{P,N}(\theta)=\frac{1}{2}\sum_{n=1}^N \varepsilon^2_n(\theta).
\end{equation*}
In practice the estimated $\hat{\theta}_N$ is defined as the solution of
\begin{equation*}
V_{P,N \theta}(\theta)=\sum_{n=1}^N \varepsilon_n(\theta) \varepsilon_{n\theta}(\theta)=0.
\end{equation*}
The asymptotic cost function associated with the PE method is defined as

$$W_{P}(\theta)=\frac{1}{2}\lim_{n \rightarrow \infty} \E \varepsilon_n^2(\theta)=\frac{1}{2} \E \varepsilon^{*2}_n(\theta),$$ recall that
$\varepsilon^*_n(\theta)$ is the innovation process that is
calculated with stationary initial values.
We have
$$
W_{P,\theta}(\theta^*)=0 \text{ and } R_P^*:=W_{P,\theta\theta}(\theta^*)=\E \left[\varepsilon^*_{n \theta}(\theta^*)\varepsilon^{T*}_{n \theta}(\theta^*)\right].
$$
The asymptotic covariance matrix of the PE estimate of $\theta^*$
is given by
\begin{equation}\label{eq:sigmap1}
\Sigma_{P}=\left(\E\left[\varepsilon^*_{n\theta}(\theta^*)\varepsilon^{T*}_{n\theta}(\theta^*)\right]\right)^{-1}.
\end{equation}
An ideal score function for the ECF method to estimate $\eta^*$
would be defined by
\begin{align}
h_{opt,n}(u,\eta)=e^{iu\varepsilon^*_n(\theta^*)}-\varphi(u,\eta).
\end{align}
Since we are not given $\theta^*$ we define an alternative,
$\theta$-dependent score function via
\begin{equation*}
h_n(u,\theta,\eta)=e^{iu\varepsilon_n(\theta)}-\varphi(u,\eta).
\end{equation*}
These are appropriate score functions since $\E \left[h^*_n(u,\theta^*,\eta^*)\right] =0.$

Fix a set of real numbers $u_1,\cdots,u_k$, with $Nk \ge  {\rm
dim}~\eta$ and define
$$h_n(\theta,\eta)=\left(h_n(u_1,\theta,\eta),\cdots,h_n(u_k,\theta,\eta)\right)^T.$$
Then we obtain the estimate $\hat{\eta}_N$ of $\eta^*$ by finding a least squares solution to the over-determined system of equations
$$
h_n(\hat{\theta}_N,\eta) =0 \quad n=1,\ldots,N
$$
More precisely, define the $\theta$-dependent cost function
$$
V_{E,N}(\theta,\eta)=\sum_{n=1}^{N}\left|K^{-1}h_n(\theta,\eta)\right|^2,
$$
where $K$ is a symmetric, positive definite weighting matrix. Then we obtain the estimate $\hat{\eta}_N$ of $\eta^*$ by minimizing $V_{E,N}(\hat{\theta}_N,\eta).$

Define the ($\theta$-dependent) asymptotic cost function as


$$W_{E}(\theta,\eta)=\E\left|K^{-1/2}h^*_{n}(\theta,\eta)\right|^2.$$
Let its Hessian w.r.t. $\eta$ at $\eta = \eta^*$ be denoted by
$$R_{E}^{*}=W_{E,\eta\eta}(\theta^*,\eta^*).$$

To formulate our result we need some technical conditions. Conditions 1 and 2 have been already presented in Section \ref{sec:disc_levy}.
Let $\rho$ be the joint parameter i.e. $\rho=(\theta,\eta).$ Let $D_{\rho}$ and $D_{\rho}^*$ be compact domains such that $\rho^* \in D_{\rho}^* \subset \text{int } D_{\rho}$ and $D_{\rho} \subset G_{\rho}.$

\textbf{Condition 3'}
The equations $W_{P,\theta}(\theta)=0,$ and $W_{E,\eta}(\theta^*,\eta)=0$ have a unique solution in $D_{\rho}^*.$

The following lemma, with minor variation, can be found in
\cite{gerencs_arma}.
\begin{lemma}
Under Conditions 1,2,3' we have $\hat{\theta}_N-\theta^*=O^{Q/2}_M(N^{-1/2}).$
\end{lemma}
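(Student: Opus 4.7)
The plan is to imitate the argument of Lemma \ref{lemma:N-1/2}, but now applied to the real-valued, quadratic cost $V_{P,N}(\theta)=\tfrac12\sum_{n=1}^N \varepsilon^2_n(\theta)$. First I would expand the first-order optimality condition around $\theta^*$:
\begin{equation*}
0 = V_{P,N\theta}(\hat{\theta}_N) = V_{P,N\theta}(\theta^*) + \overline{V}_{P,N\theta\theta}\,(\hat{\theta}_N - \theta^*),
\end{equation*}
where $\overline{V}_{P,N\theta\theta} = \int_0^1 V_{P,N\theta\theta}((1-\lambda)\theta^* + \lambda\hat{\theta}_N)\,d\lambda$. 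The task then splits into (i) bounding the ``noise term'' $V_{P,N\theta}(\theta^*)$ and (ii) showing that $\tfrac1N \overline{V}_{P,N\theta\theta}$ is uniformly bounded away from singularity on a set of high probability.

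For (i), note that $V_{P,N\theta}(\theta^*) = \sum_{n=1}^N \varepsilon_n(\theta^*)\varepsilon_{n\theta}(\theta^*)$. Passing to the stationary version, $\varepsilon^*_n(\theta^*) = \Delta Z_n$ and $\varepsilon^*_{n\theta}(\theta^*)$ is $\mathscr{F}_{n-1}^{\Delta Z}$-measurable, so by the tower property $\E[\varepsilon^*_n(\theta^*)\varepsilon^*_{n\theta}(\theta^*)] = 0$. The discrepancy between $\varepsilon_n$ and $\varepsilon^*_n$ is $O^Q_M(\alpha^n)$ by (\ref{eq:statdiff}). Since the processes $\varepsilon_n(\theta)$ and $\varepsilon_{n\theta}(\theta)$ are $L$-mixing of order $Q$ (as in the earlier lemma), their product is $L$-mixing of order $Q/2$, and the moment inequality from Theorem \ref{thm:ineq} gives $V_{P,N\theta}(\theta^*) = O^{Q/2}_M(N^{1/2})$.

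For (ii), I would argue exactly as in the proof of Lemma \ref{lemma:N-1/2}: by Condition 3' the equation $W_{P,\theta}(\theta)=0$ has $\theta^*$ as its unique solution in $D_\theta$, and $R^*_P = \E[\varepsilon^*_{n\theta}(\theta^*)\varepsilon^{T*}_{n\theta}(\theta^*)]$ is positive definite. Using Theorem \ref{thm:sup} to obtain uniform convergence
\begin{equation*}
\sup_{\theta \in D_\theta}\left|\tfrac1N V_{P,N\theta}(\theta) - W_{P,\theta}(\theta)\right| = O^{Q/(2p)}_M(N^{-1/2}),\quad \sup_{\theta \in D_\theta}\left|\tfrac1N V_{P,N\theta\theta}(\theta) - W_{P,\theta\theta}(\theta)\right| = O^{Q/(2p)}_M(N^{-1/2}),
\end{equation*}
one shows by Markov's inequality that on an event $\Omega_N$ of probability $1 - O(N^{-s})$ (any $0 < s \le Q/(4p)$), $\hat\theta_N$ lies in an arbitrarily small neighborhood of $\theta^*$ and $\lambda_{\min}(\tfrac1N \overline{V}_{P,N\theta\theta}) > c > 0$. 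Then on $\Omega_N$, inverting (\ref{eq:taylor})--style identity yields $\chi_{\Omega_N}(\hat{\theta}_N - \theta^*) = O^{Q/2}_M(N^{-1/2})$, and absorbing $\Omega_N^c$ using $P(\Omega_N^c) = O(N^{-s})$ completes the proof.

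The main obstacle is not any single computation but making sure the uniform-convergence/identifiability steps go through cleanly with the PE cost, since here the score $\varepsilon_n \varepsilon_{n\theta}$ is a product of two $L$-mixing processes of order $Q$ (so only order $Q/2$ for the product), which matches the target rate but leaves no slack; one must carefully track the orders $Q/2$ versus $Q/(2p)$ in applying Theorems \ref{thm:sup} and \ref{thm:ineq}, just as in Lemma \ref{lemma:N-1/2}.
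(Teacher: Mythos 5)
Your argument is sound and is exactly the Taylor-expansion/$L$-mixing machinery the paper uses to prove the analogous Lemma \ref{lemma:N-1/2} for the ECF cost; the paper itself offers no proof of this PE lemma, deferring to \cite{gerencs_arma}, which runs the same argument for ARMA parameters. The only point worth watching is the zero-mean hypothesis in Theorem \ref{thm:ineq}: the score $\varepsilon_n(\theta^*)\varepsilon_{n\theta}(\theta^*)$ has mean $O^{Q/2}_M(\alpha^n)$ rather than exactly zero, but the centering discrepancy sums to $O(1)$ and is absorbed, just as in the paper's own treatment of $V_{N\rho}(\rho^*)$.
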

Our next result characterizes the estimation error of the ECF method for the noise parameter $\eta^*.$
\begin{theorem}\label{thm:mixed}
Under Conditions 1,2 and 3' we have
$$
\hat{\eta}_N-\eta^*=-(R_{E}^{*})^{-1}\frac{1}{N} V_{E,N \eta
}(\eta^*)+O^{Q/(2(p+q))}_M(N^{-1}).
$$
\end{theorem}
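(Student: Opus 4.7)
The plan is to mirror the Taylor-expansion argument used in the proof of Theorem \ref{th:difference}, with the essential modification that $\theta$ enters $V_{E,N}$ only through the plug-in $\hat{\theta}_N$. Starting from the first-order optimality condition $V_{E,N\eta}(\hat{\theta}_N,\hat{\eta}_N)=0$, a two-variable Taylor expansion around $(\theta^*,\eta^*)$ gives
\begin{equation*}
0 \,=\, V_{E,N\eta}(\theta^*,\eta^*) \,+\, \bar{V}_{E,N\eta\eta}\,(\hat{\eta}_N-\eta^*) \,+\, \bar{V}_{E,N\eta\theta}\,(\hat{\theta}_N-\theta^*),
\end{equation*}
with $\bar{V}_{E,N\eta\eta}$ and $\bar{V}_{E,N\eta\theta}$ integrals along the segment from $(\theta^*,\eta^*)$ to $(\hat{\theta}_N,\hat{\eta}_N)$, exactly as in (\ref{eq:taylor}). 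The first term on the right will contribute the announced leading order; the task is to show that the cross-term has a smaller order.

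The next step is to establish a localization lemma for $\hat{\eta}_N$ analogous to the one preceding Theorem \ref{th:difference}. Using Condition 3' together with the $L$-mixing properties of $\varepsilon_n(\theta)$ and its first two $\theta$-derivatives, and invoking Theorem \ref{thm:sup}, one obtains an event $\Omega_N$ with $P(\Omega_N^c)=O(N^{-s})$ for any $0<s\le Q/(4p+4q)$, on which $\hat{\eta}_N$ is the unique root of $V_{E,N\eta}(\hat{\theta}_N,\cdot)=0$ inside a fixed small ball around $\eta^*$, and on which
\begin{equation*}
\bigl\|N^{-1}\bar{V}_{E,N\eta\eta}-R_E^*\bigr\| \,=\, O_M^{Q/(2(p+q))}(N^{-1/2}).
\end{equation*}
The crucial observation is that the cross-Hessian of $W_E$ vanishes at the truth, $W_{E,\eta\theta}(\theta^*,\eta^*)=0$. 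This follows by the same tower-law / $\mathscr{F}_{n-1}^{\Delta Z}$-measurability argument used in Lemma \ref{lemma:exp}, which in Theorem \ref{lemma:R*} produced the block-diagonal structure of $R^*$; the score function $h_n(u,\theta,\eta)$ used here is structurally identical to the one in Section \ref{sec:ECF}. The same uniform $L$-mixing machinery then gives $\|N^{-1}\bar{V}_{E,N\eta\theta}\|=O_M^{Q/(2(p+q))}(N^{-1/2})$, and combining this with $\hat{\theta}_N-\theta^*=O_M^{Q/2}(N^{-1/2})$ from the preceding lemma yields
\begin{equation*}
\bar{V}_{E,N\eta\theta}\,(\hat{\theta}_N-\theta^*) \,=\, O_M^{Q/(2(p+q))}(N^{1/2}).
\end{equation*}

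To finish, I would solve the Taylor expansion for $\hat{\eta}_N-\eta^*$ on $\Omega_N$, using $\bar{V}_{E,N\eta\eta}^{-1}=N^{-1}(R_E^*)^{-1}+O_M^{Q/(2(p+q))}(N^{-3/2})$ and the bound $V_{E,N\eta}(\theta^*,\eta^*)=O_M^{Q/2}(N^{1/2})$ obtained exactly as in the proof of Lemma \ref{lemma:N-1/2} from Theorem \ref{thm:ineq}. Multiplying through, the plug-in cross-term contributes at order $N^{-1}\cdot N^{1/2}=N^{-1/2}$ suppressed by an extra factor of $N^{-1/2}$ from the vanishing cross-Hessian, i.e.\ $O_M^{Q/(2(p+q))}(N^{-1})$. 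The complementary event $\Omega_N^c$ contributes only $O(N^{-s})$ and is absorbed into the remainder. The main obstacle is the verification of the cross-Hessian identity $W_{E,\eta\theta}(\theta^*,\eta^*)=0$, without which the plug-in error from $\hat{\theta}_N$ would enter the expansion at order $N^{-1/2}$ rather than the sharper $N^{-1}$ claimed; the rest is a careful but routine bookkeeping of $L$-mixing orders along the integration path.
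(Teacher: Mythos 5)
Your proposal is correct and rests on the same machinery the paper invokes: the localization / Taylor-expansion / $L$-mixing argument of Theorem~\ref{th:difference}, adapted to the plug-in of $\hat{\theta}_N$. The one genuine difference is how the plug-in error is absorbed. The paper expands $V_{E,N\eta}(\hat{\theta}_N,\cdot)$ in $\eta$ alone and controls the averaged Hessian through (\ref{eq:dif4}), $\|W_{\eta\eta}(\theta^*,\eta^*)-W_{\eta\eta}(\hat{\theta}_N,\eta^*)\|=O_M^{Q/2}(N^{-1/2})$, so the leading term is left as $\frac{1}{N}V_{E,N\eta}(\hat{\theta}_N,\eta^*)$; you instead expand in both variables around $(\theta^*,\eta^*)$ and kill the cross term using $W_{E,\eta\theta}(\theta^*,\eta^*)=0$, which is indeed exactly the off-diagonal block computation of Theorem~\ref{lemma:R*} (the tower-law argument with $\E\left[\varepsilon^*_{n\theta}(\theta^*)\right]=0$), so that identity is available and your reliance on it is justified. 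Your route is arguably the more informative one, since reducing the leading term to $V_{E,N\eta}(\theta^*,\eta^*)$ is what one ultimately needs in order to apply Theorem~\ref{lemma:cov_theta} for the asymptotic covariance. One slip to fix: from $N^{-1}\bar{V}_{E,N\eta\theta}=O_M^{Q/(2(p+q))}(N^{-1/2})$ and $\hat{\theta}_N-\theta^*=O_M^{Q/2}(N^{-1/2})$, H\"older gives $\bar{V}_{E,N\eta\theta}(\hat{\theta}_N-\theta^*)=O_M(N^{1/2}\cdot N^{-1/2})=O_M(1)$ (with the moment order degraded accordingly), not $O_M(N^{1/2})$ as displayed; your closing paragraph's accounting, which lands the cross term at $O_M^{Q/(2(p+q))}(N^{-1})$ after multiplying by $\bar{V}_{E,N\eta\eta}^{-1}=O(N^{-1})$, is the correct one, and the displayed intermediate bound should be corrected to match, since $O_M(N^{1/2})$ taken at face value would only yield an $O_M(N^{-1/2})$ remainder and would not prove the theorem.
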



The proof is obtained by the very same methods as Theorem \ref{th:difference} combined with the fact that
\begin{equation}\label{eq:dif4}
\left\|W_{\eta\eta}(\theta^*,\eta^*)-W_{\eta\eta}(\hat{\theta}_N,\eta^*)\right\|=O_M^{Q/2}(N^{-1/2}),
\end{equation}
which is implied by $\hat{\theta}_N-\theta^*=O^{Q/2}_M(N^{-1/2}).$ Equation (\ref{eq:dif4}) and equations (\ref{eq:dif1}), (\ref{eq:dif2}) together imply (\ref{eq:dif3}).
%
%

\section{Efficiency of the single term ECF method}
\label{sec:eff}
In view of the efficiency of the ECF method for i.i.d. samples the
question arises what can be achieved by the proposed adaptation of
the ECF method when identifying the dynamics of a linear
stochastic system. We do not have an answer to this general
question, but we will show that the commonly used PE method can be
outperformed by an appropriately calibrated ECF method when the
noise is CGMY. Without loss of generality we may assume that
$${\rm Var} \left(\Delta Z_n\right)=1.$$  Surprisingly, we will see that the ECF method may
outperform the PE method by using a single $u$ sufficiently close
to $0$. Letting $u$ tend to 0 the asymptotic covariance of the ECF
estimate tends to the asymptotic covariance of the PE estimate. On the
other hand, numerical investigations show that increasing the
number of $u$-s used in the ECF method may not improve the
efficiency significantly.
%
%

For $k=1$ the asymptotic covariance of $\hat{\theta}_N$ obtained by
the ECF method is $\lim_{N \rightarrow \infty}N{\rm Cov}(\hat{\theta}_N-\theta^*),$ , which reads as, using Theorems \ref{lemma:cov_theta} and \ref{lemma:R*},
\begin{equation*}
\begin{split}
&\left(\E\left[\varepsilon^*_{n\theta}(\theta^*)\varepsilon^{*T}_{n\theta}(\theta^*)\right]\right)^{-1}\left(-\frac{1}{4u^2}\left(\frac{\varphi(2u)}{\varphi^2(u)}+\frac{\varphi(-2u)}{\varphi^2(-u)}-\frac{2}{\varphi(u)\varphi(-u)}\right)\right).
\end{split}
\end{equation*}
Recall that the asymptotic covariance of $\hat{\theta}_N$ obtained by the PE method is
\begin{equation*}
\Sigma_{P}=\left(\E\left[\varepsilon_{n\theta}(\theta^*)\varepsilon^T_{n\theta}(\theta^*)\right]\right)^{-1}.
\end{equation*}
Thus the ECF estimator outperforms the PE estimator if
\begin{equation*}
\label{eq:var_ineq}
\frac{s}{w^2}=-\frac{1}{4u^2}\left(\frac{\varphi(2u)}{\varphi^2(u)}+\frac{\varphi(-2u)}{\varphi^2(-u)}-\frac{2}{\varphi(u)\varphi(-u)}\right)<1.
\end{equation*}
%
%
\begin{theorem}
For all $u \neq 0$, sufficiently close to $0$ we have $\frac{s}{w^2} <
1$, and thus the corresponding single-term ECF estimator of the
system parameter $\theta^*$, with $k=1$, outperforms the PE
estimator.
\end{theorem}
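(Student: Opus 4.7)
The plan is to establish $s/w^2<1$ by Taylor-expanding the right-hand side around $u=0$, identifying the first non-vanishing correction to the limit value $1$, and checking that its sign is negative for the CGMY noise considered in the paper. By Condition~1 (taken with $Q\ge 4$) the increment $\Delta Z_n$ has finite moments up to order four, so with $\E[\Delta Z_n]=0$ and the stated normalization $\mathrm{Var}(\Delta Z_n)=1$ one has
$$\varphi(u) = 1 - \tfrac{1}{2}u^2 - \tfrac{i\mu_3}{6}u^3 + \tfrac{\mu_4}{24}u^4 + O(u^5),$$
where $\mu_k=\E[(\Delta Z_n)^k]$. It is cleaner to pass to the cumulant generating function $\psi(u)=\log\varphi(u)$, whose expansion
$$\psi(u) = -\tfrac{1}{2}u^2 - \tfrac{i\mu_3}{6}u^3 + \tfrac{\mu_4-3}{24}u^4 + O(u^5)$$
exposes the fourth cumulant $\kappa_4=\mu_4-3$ directly.

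Next I would rewrite
$$f(u) := \frac{\varphi(2u)}{\varphi^2(u)} + \frac{\varphi(-2u)}{\varphi^2(-u)} - \frac{2}{\varphi(u)\varphi(-u)} = e^{A(u)} + e^{A(-u)} - 2\,e^{B(u)},$$
with $A(u)=\psi(2u)-2\psi(u)$ and $B(u)=-\psi(u)-\psi(-u)$. Expanding $A$ and $B$ to $O(u^4)$ and then each exponential to the same order, the imaginary $u^3$ pieces of $A(\pm u)$ cancel in pairs between $e^{A(u)}$ and $e^{A(-u)}$, while the $u^0$ and $u^2$ contributions combine to $0$ and $-4u^2$ respectively --- these leading identities are already forced by $f(0)=0$ and by the limit $-f(u)/(4u^2)\to 1$ announced in the discussion preceding the theorem. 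The surviving $u^4$ pieces collapse to a clean multiple of $\mu_4-3$, giving
$$f(u) = -4u^2 + \tfrac{4(\mu_4-3)}{3}\,u^4 + O(u^5), \qquad \frac{s}{w^2} = -\frac{f(u)}{4u^2} = 1 - \tfrac{\mu_4-3}{3}\,u^2 + O(u^3).$$

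It remains to verify $\mu_4-3>0$. Since $\Delta Z_n$ is the length-$h$ increment of a pure-jump L\'evy process with L\'evy measure $\nu(\mathrm{d}x,\eta^*)$, its fourth cumulant equals $h\int x^4\,\nu(\mathrm{d}x,\eta^*)$, so the normalization $\mathrm{Var}(\Delta Z_n)=1$ yields $\mu_4-3 = h\int x^4\,\nu(\mathrm{d}x,\eta^*)$, which is strictly positive for any CGMY density with $C>0$. Hence the leading correction above is strictly negative for all $u\ne 0$ in a punctured neighbourhood of $0$, which proves the theorem. The main obstacle is purely the bookkeeping in the $O(u^4)$ expansion: confirming that the $u^2$ coefficients of $e^{A(u)}+e^{A(-u)}$ and $2e^{B(u)}$ combine to exactly $-4$ (so that the announced limit $s/w^2\to 1$ is reproduced) and that the $u^4$ coefficients collapse to a clean multiple of $\mu_4-3$, the cancellation of the $u^3$ imaginary terms being guaranteed by the $u\mapsto -u$ symmetry between $A(u)$ and $A(-u)$.
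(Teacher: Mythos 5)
Your proof is correct and follows the same basic strategy as the paper's---Taylor-expand $s/w^2$ about $u=0$ and check the sign of the coefficient of $u^2$---but your execution is cleaner and strictly more general. The paper computes the derivatives of $\varphi$ at $0$ for a CGMY process, assumes $\E\left[(\Delta Z_n)^3\right]=0$ (which for CGMY forces $G=M$), and after ``a lengthy computation, that we omit'' arrives at the coefficient $\tfrac13 G^{-2}(Y-2)(Y-3)$, whose positivity it then checks (with a typo: it asserts $G<0$ where all that is needed is $G^{-2}>0$ and $(Y-2)(Y-3)>0$; its displayed expansion of $g$ also carries a sign slip relative to its own definition of $g$, though the final formula for $s/w^2$ is the correct one and agrees with yours). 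By passing to the cumulant generating function you identify this coefficient intrinsically as $\kappa_4/3$ with $\kappa_4=\mu_4-3$ the fourth cumulant---for symmetric CGMY one indeed has $\kappa_4=G^{-2}(Y-2)(Y-3)$, so the two answers coincide---and your observation that $e^{A(u)}+e^{A(-u)}$ is an even function of $u$ dispenses with the skewness assumption $\mu_3=0$ altogether. Your positivity argument, $\kappa_4=h\int x^4\,\nu(dx)>0$ for any non-trivial pure-jump L\'evy noise with finite fourth moment, is likewise more general than the paper's CGMY-specific check and makes transparent that the conclusion is not special to CGMY; the only hypothesis you need to flag, and do, is that Condition 1 must hold with $Q\ge 4$ so that $\mu_4$ is finite.
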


\begin{proof}
First note that for
\begin{equation}
g(u)=-\left(\frac{\varphi(2u)}{\varphi^2(u)}+\frac{\varphi(-2u)}{\varphi^2(-u)}-\frac{2}{\varphi(u)\varphi(-u)}\right)
\end{equation}
$g(u)=\overline{g(u)}$ holds, so $g$ is a real-valued function. Let us compute the Taylor expansion of $g$ around 0. The first three derivatives of $\varphi(u)$ for a CGMY
process with zero expectation are given by
\begin{align}\nonumber
\varphi(0)&=1, \\ \nonumber
\varphi_{u}(0)&=i \E\left[\Delta Z_n\right]=0, \\ \nonumber
\varphi_{uu}(0)&=-\E\left[(\Delta Z_n)^2\right]= \\&=-C\Gamma(2-Y)\left(M^{Y-2}+G^{Y-2}\right)=-1, \nonumber \\ \nonumber
\varphi_{uuu}(0)&=-i\E\left[(\Delta Z_n)^3\right]=0. \nonumber
\end{align}
After a lengthy computation, that we omit, we get that
\begin{equation}
g(u)=-4u^2+\frac43 G^{-2}(Y-2)(Y-3)u^4+{\cal O} (u^6).
\end{equation}
Thus
\begin{equation*}
\begin{split}
\frac{s}{w^2}=-\frac{1}{4u^2}\left(\frac{\varphi(2u)}{\varphi^2(u)}+\frac{\varphi(-2u)}{\varphi^2(-u)}-\frac{2}{\varphi(u)\varphi(-u)}\right)= \\
1-\frac13 G^{-2}(Y-2)(Y-3)u^2+{\cal O} (u^4).
\end{split}
\end{equation*}
%
%
Since $G<0$ and $0<Y<2,$ the coefficient of $u^2$ is negative. Hence, by choosing $u$ sufficiently small $\frac{s}{w^2}<1$ can be achieved. $\square$
\end{proof}
Numerical investigations show that for a CGMY process with parameters $C=0.564, G=M=1, Y=0.5$ the minimal value of $g$ is approximately $0.73$. We experienced that increasing the number of $u$-s that are used does not reduce $s/w^2$ significantly. For example, choosing $(u_1,\ldots,u_k)=(0.1,0.2,\ldots,0.1k)$ and $K=I$ we get $s/w^2=0.688.$

\section{Discussion}
In the previous section we assumed that $\E\left[\Delta
Z_n\right]=0.$ This is a standard assumption in system
identification, but certainly not realistic for financial data.
Thus e.g. in the case of a CGMY process this assumption would
imply $G=M$, excluding possible skewness in the distribution.
While the case $\E \left[\Delta Z_n\right]=m^* \neq 0$ would pose
no problem for the case of i.i.d. data, surprisingly the single
term ECF method may break down. The reason for this is that
$V^*_{N \theta}(\theta,\eta)$ is no more a score function, since
we cannot guarantee that
$$\E \left[V^*_{N
\theta}(\theta^*,\eta)\right]=0
$$
holds, see Lemma \ref{lemma:exp}. Namely in the proof of Lemma
\ref{lemma:exp} we make use of the equality
\begin{equation}
\mathbb{E} \left[\varepsilon^*_{n \theta}(\theta^*)\right] = 0,
\end{equation}
which may not be valid. Note, however, that $$
h_n(u;\theta,\eta)= e^{iu \varepsilon_n (\theta)}-\varphi(u,\eta)
$$ does have the property required for a score function, namely
\begin{equation}
\mathbb{E} \left[e^{iu \varepsilon^*_n
(\theta^*)}-\varphi(u,\eta^*) \right] = 0.
\end{equation}

Thus, using an instrumental variable approach, we may choose an
appropriate linear combination of these score functions, say $
\sum_{n=1}^N  M h_n(\theta,\eta),$ where $M$ is a $(p+r) \times
k $ matrix, and consider the equation:
$$ \sum_{n=1}^N  M
h_n(\theta,\eta) = 0.
$$
Assuming that $p+r < k$ we may rightly expect that taking
mathematical expectation the resulting equation has $(\theta^*,
\eta^*)$ as an {\it isolated} solution, and we may proceed as in
Section 5. The elaboration of the details is the subject of
ongoing research.

An alternative approach is to adapt our method of combining the PE
method with the ECF method. For this we first need to extend the
PE method to deal with the case $m^* \neq 0,$ which is a standard
exercise. Write $\Delta Z_n=\Delta e_n+m^*,$ where $\E
\left[\Delta e_n\right]=0.$ Then
equation (\ref{eq:disc_levy}) reads as
\begin{equation*}
\Delta y_n=A(\theta^*)\left(\Delta e_n+m^*\right).
\end{equation*}
Define the estimated innovation process by
\begin{equation*}
\varepsilon_n(\theta)=A^{-1}(\theta)A(\theta^*)(\Delta e_n+m^*).
\end{equation*}
Clearly $\E \left[\varepsilon_n(\theta^*)\right]=m^*,$ thus we define the cost function via
\begin{equation*}
V_N(\theta,m)=\frac{1}{2} \sum_{n=1}^{N} \left(\varepsilon_n(\theta)-m\right)^2.
\end{equation*}
The estimate $(\hat{\theta}_N,\hat{m}_N)$ of $(\theta^*,m^*)$ is obtained by solving
\begin{equation*}
\frac{\partial}{\partial (\theta,m)}V_N(\theta,m)=0,
\end{equation*}
which can be written as
\begin{align*}
0&=V_{N \theta}(\theta,m)=\sum_{n=1}^{N} \left(\varepsilon_n(\theta)-m\right)\varepsilon_{n \theta}(\theta) \\
0&=V_{N m}(\theta,m)=-\sum_{n=1}^{N} \left(\varepsilon_n(\theta)-m\right).
\end{align*}
Having estimated the system dynamics with this extended PE method
one may estimate the noise parameters with the ECF method, as in
Section $\ref{sec:mixed}.$

The shortcoming of the above approach is that it does not exploit
fully the potentials of the ECF method in estimating the system
dynamics. Therefore we suggest a second pass for estimating
$\theta^*$ via a single term ECF method, with $\hat{\eta}_N$
considered as the true parameter, applied to the system
\begin{equation}
\Delta y_n-A(\hat{\theta}_N)\hat{m}_N= A(\theta^*)(\Delta
Z_n-\hat{m}_N),
\end{equation}
where $\hat{m}_N,\hat{\theta}_N,\hat{\eta}_N$ are the first
estimates. Define $\Delta \tilde{y}_n=\Delta y_n-A(\hat{\theta}_N)\hat{m}_N$ and $\Delta \tilde{Z}_n$ the previous equation reads as
\begin{equation}
\Delta \tilde{y}_n=A(\theta^*)\Delta \tilde{Z}_n,
\end{equation}
with $\E \left[ \Delta \tilde{Z}_n\right]=0.$
Thus we may proceed according to Section $\ref{sec:ECF}$ to obtain the corrected estimate of $\theta^*.$


What we have obtained is an extension of the single term ECF
method, which is computationally simpler. Ongoing investigations
suggest that the efficiency of this generalized single term ECF
method is as good as the original single term ECF when $m^*=0.$


Finally we mention one more very different approach to deal with
the problem of non-zero expectation, having interest on its own.
The idea is to use an ECF method directly for blocks of
unprocessed data, i.e. for blocks of the time series $(y_n).$ For
this purpose let us imbed our data into the class of time series
$$\Delta y_n(\theta,\eta)=A(\theta)\Delta Z_n(\eta).$$
Note that for $(\theta,\eta) = (\theta^*,\eta^*)$ we recover (in a statistical sense) our observed data.
Fix a block length, say $r,$ and define the $r$-dimensional blocks
$$\Delta Y^r_n(\theta,\eta)=(\Delta y_n(\theta,\eta),\ldots,\Delta y_{n+r-1}(\theta,\eta)).$$
Letting $U$ be an arbitrary $r$-vector the
characteristic function of $\Delta Y^r_n (\theta,\eta)$ is given
by
$$\varphi_n(U,\theta,\eta)=\E \left[e^{iU^T \Delta Y^r_n(\theta,\eta)}\right],$$
and the corresponding score function will be defined as
$$
h_n(U,\theta,\eta)=e^{iU^T\Delta Y^r_n}-\varphi_n(U,\theta,\eta).
$$
The point is that the characteristic function can be explicitly
computed, at least in theory, as
\begin{equation}
\begin{split}
\varphi_n(U,\theta,\eta)=\E \left[\exp\{iU^T \Delta Y^r_n(\theta,\eta)\}\right]= \\
\E \left[\exp \left\{ i\sum_{j=1}^r U_j \sum_{l=0}^{\infty} h_l(\theta) \Delta Z_{n+j-1-l}(\eta)\right\}\right]= \\
\prod_{j=0}^{\infty} \varphi_{\Delta Z(\eta)}(v_j(\theta)),
\end{split}
\end{equation}
with some $\theta$-dependent constants $v_j$. Here
$\varphi_{\Delta Z}$ denotes the characteristic function of
$\Delta Z_1(\eta).$
%
The weakness of this approach is that the characteristic function
$\varphi_n(U,\theta,\eta)$ is given in terms of an infinite
product, therefore it is not clear how to use it in actual
computations.

\section{Appendix}

Let $\theta$ be a $d$-dimensional parameter vector.
\begin{definition}
We say that $x_n(\theta)$ is $M$-bounded of order Q if for all $1\leq q \leq Q$,
$$ M^Q_q(x)=\sup_{n>0, \theta \in D} \E^{1/q}\left|x_n(\theta)\right|^q < \infty $$
\end{definition}

Define $\mathscr{F}_n=\sigma\left\{e_i: i \leq n \right\}$ and $\mathscr{F}^+_n=\sigma\left\{e_i: i > n \right\}$ where $e_i$-s are i.i.d. random variables.

\begin{definition}
We say that a stochastic process $\left(x_n(\theta)\right)$ is $L$-mixing of order $Q$ with respect to $\left(\mathscr{F}_n,\mathscr{F}^+_n\right)$ uniformly in $\theta$ if it is $\mathscr{F}_n$ progressively measurable, M-bounded of order $Q$ with any positive $r$ and
$$\gamma_q(r,x)=\gamma_q(r)=\sup_{n \geq r, \theta \in D} \E^{1/q} \left|x_n(\theta)-\E\left[x_n(\theta)|\mathscr{F}^+_{n-r}\right]\right|^q,$$
we have for any $1 \leq q \leq Q,$
$$\Gamma_q(x)=\sum_{r=1}^{\infty} \gamma_q(r) < \infty.$$
\end{definition}

\begin{theorem}\label{thm:ineq}
Let $(u_n), n \geq 0$ be an $L$-mixing process of order $Q$ with $\E u_n=0$ for all $n,$ and let $(f_n)$ be a deterministic sequence. Then we have for all $1 \leq m \leq Q/2,$
\begin{align}
\E^{1/(2m)}\left|\sum_{n=1}^N f_n u_n\right|^{2m} \leq C_m \left(\sum_{n=1}^N f^2_n\right)^{1/2} M^{1/2}_{2m}(u) \Gamma^{1/2}_{2m}(u)
\end{align}
where $C_m=2(2m-1)^{1/2}.$
\end{theorem}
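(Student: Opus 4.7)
The plan is to combine Burkholder's sharp martingale inequality with a Doob-type decomposition of $u_n$ indexed by how far back in time each contribution sits, and then translate the resulting conditional-expectation norms into the $L$-mixing coefficients $\gamma_{2m}(r)$. The appearance of the constant $(2m-1)^{1/2}$ in $C_m$ is exactly the sharp Burkholder constant, which tells me that the engine of the proof is a single application of
$$\|M_N\|_{2m} \leq (2m-1)^{1/2}\bigl\|\langle M\rangle_N^{1/2}\bigr\|_{2m}$$
to a judiciously chosen martingale.

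Concretely, I would fix $u_n$ with $\E u_n=0$ and write, for each $n$,
$$u_n=\sum_{j=0}^{\infty} D_{n,j},\qquad D_{n,j}:=\E[u_n\mid\mathscr{F}_{n-j}]-\E[u_n\mid\mathscr{F}_{n-j-1}],$$
the $L^{2m}$-convergence being guaranteed by $M$-boundedness of order $2m$ together with $\E u_n=0$. For each fixed $j$, $(D_{n,j})_{n=1}^N$ is a martingale difference sequence with respect to the shifted filtration $(\mathscr{F}_{n-j})_n$, since $\E[D_{n,j}\mid\mathscr{F}_{n-j-1}]=0$ by construction. Hence $S_j:=\sum_{n=1}^N f_nD_{n,j}$ is a martingale sum, and Burkholder followed by Minkowski at exponent $m$ gives
$$\|S_j\|_{2m}\leq (2m-1)^{1/2}\Bigl(\sum_{n=1}^N f_n^2\,\|D_{n,j}\|_{2m}^2\Bigr)^{1/2}.$$

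The second ingredient is to relate $\|D_{n,j}\|_{2m}$ to $\gamma_{2m}(j)$, which uses the independence of the driving noises $(e_i)$. Set $v_n^{(j)}:=\E[u_n\mid\mathscr{F}^+_{n-j}]$; by independence, $v_n^{(j)}$ depends only on $e_{n-j+1},\dots,e_n$, so it is independent of $\mathscr{F}_{n-j}$ and has zero mean. Therefore $\E[u_n\mid\mathscr{F}_{n-j}]=\E[u_n-v_n^{(j)}\mid\mathscr{F}_{n-j}]$, and Jensen yields $\|\E[u_n\mid\mathscr{F}_{n-j}]\|_{2m}\leq\gamma_{2m}(j)$. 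This produces two complementary bounds,
$$\|D_{n,j}\|_{2m}\leq 2M_{2m}(u)\quad\text{and}\quad\|D_{n,j}\|_{2m}\leq\gamma_{2m}(j)+\gamma_{2m}(j+1),$$
the first being the crude triangle bound and the second the mixing bound.

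Finally I would assemble everything with $\|\sum_n f_nu_n\|_{2m}\leq\sum_j\|S_j\|_{2m}$ and fold the two estimates above into a single bound of the form $\|D_{n,j}\|_{2m}\leq 2\sqrt{M_{2m}(u)\,\gamma_{2m}(j)}$ via $\min(a,b)\leq\sqrt{ab}$. The main obstacle, as I see it, is precisely the last packaging step: naively summing $\sqrt{\gamma_{2m}(j)}$ in $j$ does not directly give $\Gamma_{2m}^{1/2}$. To produce the clean product $M_{2m}^{1/2}\Gamma_{2m}^{1/2}$ one must either perform the Cauchy–Schwarz at the level of $\|D_{n,j}\|_{2m}^2$ inside the Burkholder bound (using orthogonality of the $(D_{n,j})_j$ in $L^2$, so that $\sum_j\|D_{n,j}\|_{2m}^2$ can be compared with $\|u_n\|_{2m}\cdot\sum_j\|D_{n,j}\|_{2m}$), or split the $j$-sum at a truncation level and balance the two regimes. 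The $j=0$ term, for which only the crude bound $\|D_{n,0}\|_{2m}\leq M_{2m}(u)$ is available, must be handled separately and accounts for the factor $2$ in $C_m=2(2m-1)^{1/2}$. Everything else — adaptedness, conditional-expectation manipulations, Minkowski — is routine once the decomposition is set up.
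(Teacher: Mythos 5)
You should first be aware that the paper does not prove this statement: Theorem \ref{thm:ineq} is quoted in the Appendix as background material, imported from the reference \cite{gerencs_mixing}, so there is no in-paper proof to match. Judged on its own terms, your skeleton is the right one and is essentially the one used in that reference: decompose $u_n=\sum_j D_{n,j}$ with $D_{n,j}=\E[u_n\mid\mathscr{F}_{n-j}]-\E[u_n\mid\mathscr{F}_{n-j-1}]$, observe that for fixed $j$ the sequence $(f_nD_{n,j})_n$ is a martingale difference sequence for the shifted filtration, apply the sharp $L^{2m}$ martingale inequality with constant $(2m-1)^{1/2}$, and control $\|D_{n,j}\|_{2m}$ both crudely by $M_{2m}(u)$ and, via the independence of $\mathscr{F}^{+}_{n-j}$ from $\mathscr{F}_{n-j}$, by the mixing coefficients. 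All of that part of your argument is sound (one small imprecision: $\E[u_n\mid\mathscr{F}^{+}_{n-j}]$ depends on \emph{all} $e_i$ with $i>n-j$, not only on $e_{n-j+1},\dots,e_n$, but independence from $\mathscr{F}_{n-j}$ is all you need).

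The genuine gap is exactly the step you flag and then leave open: passing from the per-$j$ bounds to the product $M_{2m}^{1/2}(u)\Gamma_{2m}^{1/2}(u)$. Your proposed packaging $\|D_{n,j}\|_{2m}\le 2\sqrt{M_{2m}(u)\gamma_{2m}(j)}$ followed by summation over $j$ cannot work, because $\Gamma_{2m}(u)=\sum_r\gamma_{2m}(r)<\infty$ does not imply $\sum_j\gamma_{2m}^{1/2}(j)<\infty$ (take $\gamma_{2m}(j)\sim j^{-2}$). Worse, the information you have retained per $j$, namely $\sup_n\|D_{n,j}\|_{2m}\le 2\min\left(M_{2m}(u),\gamma_{2m}(j)\right)$, is by itself insufficient for \emph{any} packaging: summing it yields only a bound of order $M_{2m}(u)+\Gamma_{2m}(u)$, which by the AM--GM inequality dominates $2\,M_{2m}^{1/2}(u)\Gamma_{2m}^{1/2}(u)$ and is attained, e.g., when $\gamma_{2m}(j)\approx M_{2m}(u)$ for $j\le K$ (a normalized length-$K$ moving average), precisely the case where the claimed bound is smaller by a factor $\sqrt{K}$. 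So the proof must exploit additional structure -- some form of near-orthogonality of the $D_{n,j}$ across $j$, of which the triangle inequality $\|\sum_j S_j\|_{2m}\le\sum_j\|S_j\|_{2m}$ keeps no trace. Your first suggested repair (Parseval for the martingale differences in $j$) is the right instinct but is an $L^2$ identity and does not control $\sum_j\|D_{n,j}\|_{2m}^2$ without a further argument; your second (truncating the $j$-sum and balancing) again only reaches $M_{2m}(u)+\Gamma_{2m}(u)$. Until this step is carried out, the proof establishes a weaker inequality than the one stated, and in particular does not justify the constant $C_m=2(2m-1)^{1/2}$; for the missing argument you should consult the proof of Theorem 1.1 in \cite{gerencs_mixing}.
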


Define $$\Delta x/\Delta^{\alpha} \theta=\left|x_n(\theta+h)-x_n(\theta)\right|/\left|h\right|^{\alpha}$$
for $n\geq0, \theta\neq\theta+h \in D$ with $0<\alpha \leq 1.$

\begin{definition}
We say that $x_n(\theta)$ is $M$-H\"{o}lder continuous of order $Q$ in $\theta$ with exponent $\alpha$ if the process $\Delta x/\Delta^{\alpha} \theta$ is $M$-bounded of order $Q.$
\end{definition}

Now let us suppose that $(x_n(\theta))$ is measurable, separable, $M$-bounded of order $Q$ and $M$-H\"{o}lder of order $Q$ in $\theta$ with exponent $\alpha$ for $\theta \in D.$ The realizations of $(x_n(\theta))$ are continuous in $\theta$ almost surely hence $$x^*_n=\max_{\theta \in D_0} \left|x_n(\theta)\right|$$ is well defined for almost all $\omega,$ where $D_0 \subset \text{int } D$ is a compact domain. Since the realizations of $(x_n(\theta))$ are continuous, $x^*_n$ is measurable with respect to $\mathscr{F}.$

\begin{theorem}\label{thm:moments}
Assume that $(x_n(\theta))$ is measurable, separable, $M$-bounded of order $Q$ and $M$-H\"{o}lder of order $Q$ in $\theta$ with exponent $\alpha$ for $\theta \in D.$ Then we have for all positive $q \leq Q\alpha/s$ and $p/\alpha <s \leq Q/q,$
$$M_q(x^*)\leq C \left(M_{qs}(x)+M_{qs}(\Delta x/\Delta^{\alpha} \theta)\right)$$
where $C$ depends only on $p, q, s, \alpha$ and $D_0, D.$
\end{theorem}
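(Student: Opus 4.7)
The plan is to adapt a standard chaining / Kolmogorov--Chentsov argument. I will bound $x_n^*$ by its value on a coarse grid plus a telescoping sum of oscillations across increasingly fine dyadic refinements of $D_0$, and then invoke the $M$-H\"older hypothesis to control each piece in $L^q$-mean.

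Concretely, I would first build a nested family of grids $G_0 \subset G_1 \subset \cdots$ in $D_0$ with spacing of order $2^{-k}$ and cardinality $|G_k| \leq C_p\, 2^{kp}$ (where $p$ denotes the dimension of $\theta$), and for each $\theta \in D_0$ fix a nearest-point projection $\pi_k(\theta) \in G_k$, so that $|\pi_{k+1}(\theta) - \pi_k(\theta)| \leq c\, 2^{-k}$. Since $x_n(\cdot)$ is $M$-H\"older with positive exponent $\alpha$, almost every sample path is continuous on the compact set $D_0$, and together with separability this yields, a.s.,
$$
x_n^* \leq \max_{\theta \in G_0} |x_n(\theta)| + \sum_{k=0}^{\infty} D_{k,n}, \qquad D_{k,n} := \max_{(\theta_k,\theta_{k+1})} \bigl|x_n(\theta_{k+1}) - x_n(\theta_k)\bigr|,
$$
where the inner maximum is over the $O(2^{kp})$ admissible pairs $(\theta_k,\theta_{k+1}) \in G_k \times G_{k+1}$ at distance $\leq c\, 2^{-k}$.

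The core estimate at each level combines two ingredients. First, an elementary max-to-sum moment bound: for any finite family $Y_1,\dots,Y_N$ one has $M_q(\max_i |Y_i|) \leq N^{1/(qs)}\, M_{qs}(Y)$, obtained from $\max_i |Y_i|^{qs} \leq \sum_i |Y_i|^{qs}$ together with Jensen's inequality. Second, the $M$-H\"older hypothesis gives $|x_n(\theta_{k+1}) - x_n(\theta_k)| \leq c\, 2^{-k\alpha}\, (\Delta x / \Delta^{\alpha}\theta)$ evaluated at the relevant pair. Combining these at level $k$,
$$
M_q(D_{k,n}) \leq C\, 2^{-k(\alpha - p/(qs))}\, M_{qs}(\Delta x / \Delta^{\alpha}\theta),
$$
with an analogous base-level bound $M_q(\max_{G_0} |x_n(\cdot)|) \leq C\, M_{qs}(x)$. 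Applying Minkowski to sum $M_q$-norms across levels and then summing the resulting geometric series yields the claimed inequality.

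The main obstacle is guaranteeing convergence of that geometric series, which requires exactly $\alpha > p/(qs)$, i.e.\ $qs > p/\alpha$; this is precisely what the hypothesis $s > p/\alpha$ delivers (for $q \geq 1$; the sub-case $q < 1$ reduces to $q = 1$ by monotonicity $M_q \leq M_1$). The complementary hypothesis $q \leq Q\alpha/s$ ensures $qs \leq Q\alpha \leq Q$, so that $M_{qs}(\cdot)$ may lawfully be applied to both $x$ and $\Delta x / \Delta^{\alpha}\theta$. All constants produced along the chain depend only on $p, q, s, \alpha$ and on the geometry of $D_0 \subset D$, matching the stated dependence of $C$.
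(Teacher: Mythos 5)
The paper itself gives no proof of Theorem \ref{thm:moments}: it is stated in the Appendix as imported background from the $L$-mixing literature (essentially Gerencs\'er's maximal inequality from \cite{gerencs_mixing}), so there is no in-paper argument to compare against. Your dyadic chaining proof is a standard and essentially correct route to this kind of result: the grid cardinality $O(2^{kp})$, the max-to-sum bound $M_q(\max_i|Y_i|)\le N^{1/(qs)}M_{qs}(Y)$, the level-$k$ increment bound $c\,2^{-k\alpha}M_{qs}(\Delta x/\Delta^{\alpha}\theta)$, and the convergence condition $\alpha>p/(qs)$ coming from $s>p/\alpha$ all fit together correctly and produce a constant with exactly the stated dependence.

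Two caveats. First, your disposal of the case $q<1$ by ``monotonicity $M_q\le M_1$'' does not actually recover the stated bound: applying the theorem at $q'=1$ produces $M_{s}$ on the right-hand side, which dominates (rather than is dominated by) the claimed $M_{qs}$ when $q<1$. The correct fix is cheap: for $0<q\le 1$ replace Minkowski by the subadditivity $\E\bigl|\sum_k D_{k,n}\bigr|^q\le\sum_k\E|D_{k,n}|^q$, so the geometric decay $2^{-kq(\alpha-p/(qs))}$ still sums and the bound retains $M_{qs}$. Second, a.s.\ sample-path continuity is not an immediate consequence of the $M$-H\"older moment bound alone; it follows from the Kolmogorov--Chentsov criterion, which needs $\alpha qs>p$ --- precisely what your chaining estimate establishes --- and it is separability that lets you identify the supremum over $D_0$ with the supremum over the dyadic skeleton. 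Stating this order of logic explicitly (chaining bound on the countable skeleton first, then separability to pass to $x_n^*$) would close the argument; the paper itself simply asserts continuity in the sentence preceding the theorem.
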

Choosing $f_n=1$ and $\alpha=1$ and using Theorem \ref{thm:ineq} and \ref{thm:moments} we obtain

\begin{theorem}\label{thm:sup}
Let $(u_n(\theta))$ be an $L$-mixing of order $Q$ uniformly in $\theta \in D$ such that $\E u_n(\theta)=0$ for all $n \geq 0, \theta \in D,$ and assume that $\Delta u /\Delta \theta$ is also $L$-mixing of order $Q,$ uniformly in $\theta, \theta+h \in D.$ Then
\begin{equation}
\sup_{\theta \in D_0} \left|\frac{1}{N} \sum_{n=1}^N u_n(\theta)\right|=O_M^{Q/p}(N^{-1/2})
\end{equation}
\end{theorem}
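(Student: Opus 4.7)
The plan is to combine Theorem~\ref{thm:ineq} (the moment inequality for sums of $L$-mixing processes) with Theorem~\ref{thm:moments} (the bound on a parameter-supremum in terms of moment norms of the process and of its $\theta$-difference), exactly as the prose preceding the statement suggests. Introduce the normalized partial sum
$$S_N(\theta) = \frac{1}{N}\sum_{n=1}^N u_n(\theta).$$
The goal is to show that $S_N$, viewed as a $\theta$-indexed random field, is $M$-bounded of order $Q$ \emph{and} $M$-H\"older of order $Q$ in $\theta$ with exponent $\alpha=1$, with both the $M$-bound and the H\"older-bound of order $O(N^{-1/2})$, and then to apply Theorem~\ref{thm:moments}.

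First I would fix $\theta\in D$ and apply Theorem~\ref{thm:ineq} with $f_n\equiv 1$ to the centered, uniformly $L$-mixing process $\bigl(u_n(\theta)\bigr)$. For every $2m\leq Q$ this gives
$$\E^{1/(2m)}\!\left|\sum_{n=1}^N u_n(\theta)\right|^{2m} \leq C_m N^{1/2} M_{2m}^{1/2}(u)\,\Gamma_{2m}^{1/2}(u).$$
Because the $L$-mixing constants $M_{2m}(u),\Gamma_{2m}(u)$ are uniform in $\theta\in D$ by hypothesis, dividing by $N$ gives $\sup_{\theta\in D}\E^{1/(2m)}|S_N(\theta)|^{2m}\leq C_m N^{-1/2}$, i.e.\ $S_N=O_M^Q(N^{-1/2})$ uniformly in $\theta$. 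Applying the same moment inequality to the difference quotient $\Delta u_n/\Delta\theta$, which is $L$-mixing of order $Q$ uniformly in $\theta,\theta+h\in D$ by assumption, yields
$$\frac{\Delta S_N}{\Delta \theta}(\theta) \;=\; \frac{1}{N}\sum_{n=1}^N \frac{\Delta u_n}{\Delta \theta}(\theta) \;=\; O_M^Q(N^{-1/2})$$
uniformly in $\theta,\theta+h\in D$. Thus $S_N$ is indeed $M$-bounded and $M$-H\"older of order $Q$ with exponent $\alpha=1$, and the relevant moment norms all scale like $N^{-1/2}$.

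Next I would invoke Theorem~\ref{thm:moments} with $\alpha=1$. For any $s>p$ and any $q\leq Q/s$ the theorem gives
$$M_q\!\left(\sup_{\theta\in D_0}\bigl|S_N(\theta)\bigr|\right) \;\leq\; C\bigl(M_{qs}(S_N)+M_{qs}(\Delta S_N/\Delta\theta)\bigr)\;=\;O(N^{-1/2}),$$
with $C$ depending only on $p,q,s,D_0,D$. Letting $s$ decrease to $p$ covers any $q<Q/p$, which is exactly the definition of $\sup_{\theta\in D_0}|S_N(\theta)|=O_M^{Q/p}(N^{-1/2})$.

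The argument is essentially mechanical, so there is no real obstacle. The only delicate step is the bookkeeping of moment orders in the second part: one trades a factor $s$ slightly larger than $p$ in the available moment order of the input processes for control of the supremum over a $p$-dimensional parameter set, which is precisely why the final exponent drops from $Q$ to $Q/p$. The hypothesis that carries the whole plan is the \emph{uniformity in $\theta$} of the $L$-mixing constants of both $u_n$ and $\Delta u_n/\Delta \theta$, which propagates directly into uniform-in-$\theta$ bounds for $M_{qs}(S_N)$ and $M_{qs}(\Delta S_N/\Delta\theta)$.
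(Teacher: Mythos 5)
Your proposal is correct and follows exactly the route the paper indicates: the paper's entire proof is the one-line remark that the result follows by choosing $f_n=1$ and $\alpha=1$ in Theorems~\ref{thm:ineq} and~\ref{thm:moments}, and you have simply filled in that combination (moment bound of order $N^{-1/2}$ for $S_N(\theta)$ and for $\Delta S_N/\Delta\theta$, then the supremum bound at the cost of dropping the moment order from $Q$ to $Q/p$). The only cosmetic caveat, shared with the paper itself, is the endpoint $q=Q/p$, which Theorem~\ref{thm:moments} strictly speaking only yields in the limit $s\downarrow p$.
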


\begin{theorem}\label{thm:implicit}
Let $D_0$ and $D$ be as above. Let $W_{\theta}(\theta), \delta W_{\theta}(\theta), \ \theta \in D \subset \mathbb{R}^p$ be $\mathbb{R}^p$-valued continuously differentiable functions, let for some $\theta^* \in D_0, W_{\theta}(\theta^*)=0,$ and let $W_{\theta \theta}(\theta^*)$ be nonsingular. Then for any $d>0$ there exists positive numbers $d',d''$ such that
\begin{equation}
\left|\delta W_{\theta}(\theta)\right|<d' \text{ and } \left\|\delta W_{\theta \theta}(\theta)\right\|<d''
\end{equation}
for all $\theta \in D_0$ implies that the equation $W_{\theta}(\theta)+\delta W_{\theta}(\theta)=0$ has exactly one solution in a neighborhood of radius $d$ of $\theta^*.$
\end{theorem}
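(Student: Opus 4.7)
The plan is to recast the perturbed equation $W_\theta(\theta) + \delta W_\theta(\theta) = 0$ as a fixed-point problem and apply the Banach contraction principle. Setting $A := W_{\theta\theta}(\theta^*)^{-1}$, which exists by the nonsingularity assumption, define
$$T(\theta) := \theta - A\bigl(W_\theta(\theta) + \delta W_\theta(\theta)\bigr),$$
so that fixed points of $T$ coincide with solutions of the perturbed equation. The goal is then to show that, for $d', d''$ small enough in terms of $d$, the map $T$ is a strict contraction of $\overline{B(\theta^*,d)}$ into itself.

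First I would verify the self-mapping property. Using $W_\theta(\theta^*)=0$ and the integral form of Taylor's theorem,
$$T(\theta)-\theta^* \;=\; -A\int_0^1 \bigl[W_{\theta\theta}(\theta^*+s(\theta-\theta^*))-W_{\theta\theta}(\theta^*)\bigr](\theta-\theta^*)\,ds \;-\; A\,\delta W_\theta(\theta).$$
Continuity of $W_{\theta\theta}$ at $\theta^*$ provides a modulus $\omega(d)\to 0$ as $d\to 0$ that dominates the first term by $\omega(d)\|A\|\,|\theta-\theta^*|$ on $\overline{B(\theta^*,d)}$, while the second is at most $\|A\|d'$. Shrinking $d$ so that $\omega(d)\|A\|\le 1/2$ and then choosing $d'\le d/(2\|A\|)$ forces $|T(\theta)-\theta^*|\le d$.

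Next I would establish a contraction estimate by differentiating:
$$T_\theta(\theta) \;=\; A\bigl[W_{\theta\theta}(\theta^*)-W_{\theta\theta}(\theta)\bigr] - A\,\delta W_{\theta\theta}(\theta),$$
whose norm is at most $\omega(d)\|A\| + \|A\|\,d''$. Taking $d'' \le 1/(4\|A\|)$ and further refining $d$ so that $\omega(d)\|A\| \le 1/4$ yields $\|T_\theta(\theta)\|\le 1/2$ uniformly on $\overline{B(\theta^*,d)}$, so by the mean-value inequality $T$ is $\tfrac{1}{2}$-Lipschitz. The Banach fixed-point theorem then supplies a unique fixed point of $T$ in $\overline{B(\theta^*,d)}$, which is the desired unique solution of $W_\theta + \delta W_\theta = 0$.

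The main subtlety I expect is the coupling between $d$ and the two perturbation thresholds: the contraction argument genuinely requires $d$ small enough that $W_{\theta\theta}$ stays close to $W_{\theta\theta}(\theta^*)$ throughout $\overline{B(\theta^*,d)}$, so the scheme is transparent only for small $d$. For a larger prescribed $d$, I would first carry out the contraction on a sufficiently small ball $\overline{B(\theta^*,\tilde d)}$ as above, obtaining a solution and local uniqueness there, and then rule out additional solutions on the compact annulus $\{\theta\in D_0 : \tilde d \le |\theta-\theta^*|\le d\}$ by using that $W_\theta$ is bounded below in norm on that set (since $\theta^*$ is the only zero in a neighborhood) and hence so is $W_\theta+\delta W_\theta$ once $d'$ is small enough. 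This compactness step is the one place where a purely local fixed-point argument has to be combined with a global ingredient.
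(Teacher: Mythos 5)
The paper states Theorem~\ref{thm:implicit} in the Appendix as a quoted auxiliary result and supplies no proof of its own, so there is nothing to compare against line by line; I can only assess your argument on its merits. Your contraction scheme is the standard and correct way to prove the quantitative perturbation form of the implicit function theorem: the identity $T(\theta)-\theta^*=-A\int_0^1[W_{\theta\theta}(\theta^*+s(\theta-\theta^*))-W_{\theta\theta}(\theta^*)](\theta-\theta^*)\,ds-A\,\delta W_\theta(\theta)$ is right, the self-mapping and Lipschitz estimates are correctly budgeted, and for $d$ small enough that $\overline{B(\theta^*,d)}\subset D_0$ and $\omega(d)\|A\|\le 1/4$ the Banach fixed-point step gives existence and uniqueness exactly as claimed.

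The one genuine gap is in your treatment of large $d$. You rule out extra solutions on the annulus $\{\tilde d\le|\theta-\theta^*|\le d\}$ by asserting that $W_\theta$ is bounded below there ``since $\theta^*$ is the only zero in a neighborhood.'' Nonsingularity of $W_{\theta\theta}(\theta^*)$ only makes $\theta^*$ an \emph{isolated} zero, i.e.\ the unique zero in some possibly very small neighborhood; it does not prevent $W_\theta$ from having a second nondegenerate zero at distance less than $d$ (take $W_\theta(x)=\sin x$, $\theta^*=0$, $d=4$, $\delta W_\theta\equiv 0$: both $0$ and $\pi$ solve the equation). So the theorem as literally stated is false for large $d$ unless one additionally assumes that $\theta^*$ is the unique zero of $W_\theta$ in $D_0\cap\overline{B(\theta^*,d)}$. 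That assumption is in force wherever the paper invokes the theorem --- Condition 3 guarantees $W_\rho(\rho)=0$ has a unique solution, and the lemma in Section~\ref{sec:ECF_proofs} explicitly sets $d'=\inf\{|W_\rho(\rho)|:\rho\in D_\rho,\ |\rho-\rho^*|\ge d\}>0$ before appealing to Theorem~\ref{thm:implicit} --- so your annulus argument is exactly the right global ingredient, but you should state the global uniqueness of the unperturbed zero as an explicit hypothesis rather than derive it from local nonsingularity.
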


\end{document}